\newtheorem{thm}{Theorem}[section]
\newtheorem{lem}{Lemma}[section]
\renewenvironment{abstract}{%
        \small
        \quotation
         \noindent {\bfseries \abstractname } }%
      {\if@twocolumn\else\endquotation\fi}
\def\Im{\mathrm{\,Im\,}}
\title{\Large\bf
Well-posedness and asymptotic estimate for a diffusion equation with time-fractional derivative}
\author{\large Zhiyuan LI $^1$, Xinchi HUANG $^2$, Masahiro YAMAMOTO$^3$}
\date{}
\begin{document}
\maketitle

\renewcommand{\thefootnote}{\fnsymbol{footnote}}
\footnotetext{\hspace*{-5mm} 
\begin{tabular}{@{}r@{}p{16cm}@{}} 
%& Manuscript last updated: \today.\\
$^1$ 
& School of Mathematics and Statistics,
Shandong University of Technology,
Zibo, Shandong 255049, China. 
E-mail: zyli@sdut.edu.cn\\
$^2$
& Graduate School of Mathematical Sciences, 
the University of Tokyo,
3-8-1 Komaba, Meguro-ku, Tokyo 153-8914, Japan.\\
& JSPS Postdoctoral Fellowships for research in Japan.
E-mail: huangxc@ms.u-tokyo.ac.jp\\
$^3$
& Graduate School of Mathematical Sciences, 
the University of Tokyo,
3-8-1 Komaba, Meguro-ku, Tokyo 153-8914, Japan. 
E-mail: myama@ms.u-tokyo.ac.jp\\
& Honorary Member of Academy of Romanian Scientists,
Ilfov, nr. 3, Bucuresti, Romania.\\
& Correspondence member of Accademia Peloritana dei Pericolanti,
Palazzo Universit\'a, Piazza S. Pugliatti 1 98122 Messina, Italy.\\
& Peoples' Friendship University of Russia (RUDN University), 
6 Miklukho-Maklaya St, Moscow, 117198, Russia.
\end{tabular}}

%%%%%%%%%%%%%%%%%%%%%%%%%%%%%%%%%%%%%%%%%%%%%%%%%
%%%%%%%%%%%%%%%%%%%%%%%%%%%%%%%%%%%%%%%%%%%%%%%%%
\begin{abstract}
In this paper, we study the asymptotic estimate of 
solution for a mixed-order time-fractional diffusion equation in a bounded domain subject to 
the homogeneous Dirichlet boundary condition. 
Firstly, the unique existence and regularity estimates of solution to the initial-boundary value problem 
are considered. Then combined with some important properties, including a maximum principle for 
a time-fractional ordinary equation and a coercivity inequality for fractional derivatives, the energy 
method shows that the decay in time of the solution is dominated by the term $t^{-\alpha}$ as 
$t\to\infty$. 

\vskip 4.5mm

\noindent\begin{tabular}{@{}l@{ }p{11.5cm}} {\bf Keywords } &
mixed-order fractional diffusion equation,
initial-boundary value problem,   
asymptotic estimate,
energy method
\end{tabular}

\vskip 4.5mm

\noindent{\it MSC 2010\/}: Primary 35R11; Secondary 35B40, 26A33, 34A08, 35B50

\end{abstract}

%%%%%%%%%%%%%%%%%%%%%%%%%%%%%%%%%%%%%%%%%%%%%%%%%
%%%%%%%%%%%%%%%%%%%%%%%%%%%%%%%%%%%%%%%%%%%%%%%%%
\section{Introduction}
\label{intro}

Within the last few decades, an abundance of anomalous processes was observed and 
confirmed by more and more experiments in several different application areas in natural sciences, 
e.g., biology, geological sciences, medicine, see \cite{Hilfer00}, \cite{Ko08}, \cite{Ko11}, \cite{SKB02}, 
\cite{U13} and the references therein. 
For example, to characterize these diffusion processes, an important micro statistic quantity--
the mean square displacement which describes how fast particles diffuse was used. 
In the most of anomalous diffusion cases, one observes a fractional power-law mean square 
displacement (cf. \cite{MK00}), which shows that the diffusion is slower than that in the classical 
diffusion case.

For the mathematical studies of the anomalous diffusion, we refer to \cite{RA94} in which 
some macro models in the form of fractional diffusion equations were derived by the technique of 
the continuous-time random walk under some suitable conditions posed on the probability density 
functions for the jumps length and the waiting times between two successive jumps. 
In most of the cases, these macro models have a form of the single or multi-term time-, space-, or 
time-space-fractional differential equations (see e.g., \cite{C95}, \cite{Luc11a}, \cite{Luc11b} and 
the references therein for the details). 

In this paper, we consider one of the important special cases: the time-fractional diffusion equation, 
which attracts great attention from many aspects during the last years, mostly due to their applications 
in the modeling of anomalous diffusion. 
For example, we mention important applications on some amorphous semiconductors 
\cite{MK00}, \cite{U13}, the modeling of dynamic processes in polymer materials, 
heat conduction with memory \cite{P93} and the diffusion in fluids in porous media 
\cite{C99}, \cite{HH98}, \cite{Ja01}. 
We refer to \cite{CLY17}, \cite{HLY19}, \cite{LHY20}, \cite{LLY15} and \cite{L09}, for 
the mathematical theory of the fractional differential equations, whereas we refer to 
\cite{GOS18}, \cite{JLZ16}, \cite{LX16}, \cite{PS16}, \cite{SLC12}, \cite{SZT17} for the numerical study. 

The main goal of this paper is to establish decay estimates for the solution of our mixed-order 
fractional diffusion equation by energy method. 
As is known, the asymptotic behavior of solutions to the equations which describes some physical 
processes is important both by itself and as a basis for analysis of the suitable numerical methods for 
the solutions and the inverse problems for these equations. 
Researches are rapidly growing on the asymptotic behavior for the time-fractional diffusion equations 
and we only give a brief and typical review of the existing works instead of a comprehensive list. 
The asymptotic behavior as $t\to\infty$ for the single or multi-term time-fractional diffusion equations 
in a bounded domain was studied in \cite{LHY20}, \cite{LLY15} and \cite{SY11}, 
where the decay of solutions is dominated by the lowest order of the fractional derivatives. 
The proof of this fact is based on an explicit representation formula for the solution by the 
Fourier expansion method. From this explicit formula, by evaluating the inversion transform of 
the solutions, the decay in time of the solutions can be obtained. 
In the unbounded domain, we refer to \cite{CLY17} and \cite{KSZ}, where properties of several special 
functions, for example, the H-functions, the Mittag-Leffler functions, were used to obtain the formula of 
the solution. 
It turns out that all the above arguments heavily rely on the explicit representation of the solution. 
Indeed, the coefficients of the equation are required to be $t$-independent at least, hence the Fourier 
method works and derivation of explicit representation formula of the solution becomes possible.

In this paper, we continue the research activities initiated in \cite{LHY20}, \cite{LLY15} and \cite{SY11}, 
and employ an energy method to deal with the fractional diffusion equation with $t$-dependent 
coefficients for which there are no explicit representation formula for the solution. 
We will see that the $L^2(\Omega)$-norm of the solution is dominated by $c_\alpha t^{-\alpha}$. 
This energy method has been widely used to deal with the asymptotic estimate for other types of 
evolution equations, see e.g., \cite{Kubica} and \cite{VZ} and the references therein.

The rest of the paper is organized as follows: 
In Section {\bf 2}, we formulate our problem and show our main results including the well-posedness and 
the long-time asymptotic behavior of the solution to the initial-boundary value problem for 
the time-fractional diffusion equation. 
The proof of the well-posedness of the solution is given in Section {\bf 3}, 
whereas the long-time asymptotic estimate is proved in Section {\bf 4}. 
Finally, the last section is devoted to the conclusions and some open problems. 

%%%%%%%%%%%%%%%%%%%%%%%%%%%%%%%%%%%%%%%%%%%%%%%%%
%%%%%%%%%%%%%%%%%%%%%%%%%%%%%%%%%%%%%%%%%%%%%%%%%
\section{Problem formulation and main results}
\label{sect-intro}
In this paper, let $T>0$ and $\Omega$ be an open bounded domain in $\mathbb{R}^d$ with 
a smooth boundary $\partial\Omega$. 
We deal with the time-fractional differential equation
\begin{equation}
\label{equ-u}
\partial_t u + q(t)\partial_t^\alpha u
= - Au + c(x,t)u + f(x,t), \ \ (x,t) \in \Omega\times(0,T)
\end{equation}
with the initial-boundary value
\begin{equation}
\label{equ-ibc}
\left\{ \begin{array}{l}
u(x,0)=u_0(x), \ \ x\in  \Omega, \\
u(x,t)=0,\ \ (x,t) \in \partial\Omega\times(0,T),
\end{array}\right.
\end{equation}
where the coefficients $q,c$ are smooth enough, e.g., 
$c\in L^\infty(0,T;W^{2,\infty}(\Omega))$, $q\in L^\infty(0,T)$ 
and $A$ is a symmetric uniformly elliptic operator defined by 
$$
A u(x) := -\sum_{i,j=1}^d \frac{\partial}{\partial x_i}  \left(a_{ij}(x)\frac{\partial}{\partial x_j} u(x)\right),
\quad u\in D(A):= H_0^1(\Omega)
$$
with $a_{ij}(x) = a_{ji}(x)$, $1\leq i,j \leq d,\ x\in \overline\Omega$
and $a_{ij}\in C^1(\overline{\Omega})$ such that
$$
\sum_{i,j=1}^d a_{ij}(x)\xi_i \xi_j
\geq \nu|\xi|^2 ,\quad\forall x\in \overline\Omega,\ \forall\xi=(\xi_1,\cdots,\xi_d )\in\Bbb{R}^d
$$
for some constant $\nu > 0$. 
By $\partial^{\alpha}_t$ we denote the Caputo fractional derivative of order $\alpha\in (0,1)$:
$$
\partial^{\alpha}_t \varphi(t) := 
\frac{1}{\Gamma(1-\alpha)} \int_0^t (t-\tau)^{-\alpha}\, \frac{d}{d\tau} \varphi(\tau)\, d\tau.
$$
Here and henceforth $L^2(\Omega)$, $H^1(0,T)$, $H^1(\Omega)$ and $H_0^1(\Omega)$ denote 
the usual Lebesgue space and the Sobolev spaces, 
and $H^{-1}(\Omega)$ denotes the dual space of $H_0^1(\Omega)$. 
Meanwhile we denote $\|\cdot\|_{L^2(\Omega)}$, $\|\cdot\|_{H^1(0,T)}$, $\|\cdot\|_{H^1(\Omega)}$, 
$\|\cdot\|_{H_0^1(\Omega)}$ and $\|\cdot\|_{H^{-1}(\Omega)}$ as the corresponding norms. 

In this paper, we mainly discuss the unique existence and the long-time asymptotic behavior of 
the solution to the initial-boundary value problem \eqref{equ-u}--\eqref{equ-ibc}. 

Our main results are presented in Theorems \ref{thm-uniq-exist} and \ref{thm-asymp-long} 
formulated below and the proofs are given in Sections {\bf 3} and {\bf 4}. 
We start with a result of the unique existence and the regularity of the solution. 
For arbitrarily fixed $T>0$, we have the following theorem. 
\begin{thm}
\label{thm-uniq-exist}
Let $u_0\in L^2(\Omega)$, $f\in L^2(0,T;H^{-1}(\Omega))$. Then there exists a unique solution 
$u\in H^1(0,T; H^{-1}(\Omega)) \cap L^2(0,T; H_0^1(\Omega)) \cap C([0,T];L^2(\Omega))$ 
to the initial-boundary value problem \eqref{equ-u}--\eqref{equ-ibc}, 
and there exists a constant $C_1>0$ such that
\begin{equation}
\label{esti-fst}
\left\|u\right\|_{H^1(0,T;H^{-1}(\Omega))} + \|u\|_{L^2(0,T;H_0^1(\Omega))}
\le C_1\left(\|u_0\|_{L^2(\Omega)} + \|f\|_{L^2(0,T;H^{-1}(\Omega))}\right).
\end{equation}
In addition, we assume that $u_0\in H_0^1(\Omega)$ and $f\in L^2(0,T; L^2(\Omega))$. 
Then the solution $u$ further belongs to 
$H^1(0,T; L^2(\Omega)) \cap L^2(0,T; H^2(\Omega)\cap H_0^1(\Omega))$, 
and there exists a constant $C_2>0$ satisfying
\begin{equation}
\label{esti-sec}
\left\|u\right\|_{H^1(0,T;L^2(\Omega))} + \|u\|_{L^2(0,T;H^2(\Omega))}
\le C_2\left(\|u_0\|_{H_0^1(\Omega)} + \|f\|_{L^2(0,T;L^2(\Omega))}\right).
\end{equation}
Furthermore, we assume that $u_0\in H^2(\Omega)\cap H_0^1(\Omega)$ and 
$f\in H^1(0,T; L^2(\Omega))$. 
Then $u\in W^{1,\infty}(0,T;L^2(\Omega))\cap L^\infty(0,T;H^2(\Omega)\cap H_0^1(\Omega))$, 
and there exists a constant $C_3>0$ satisfying
\begin{align*}
\mathrm{ess} \hspace{-0.2cm}\sup_{0\le t\le T} \left(\|\partial_t u(t)\|_{L^2(\Omega)} 
+ \|u(t)\|_{H^2(\Omega)} \right) 
\le C_3\left( \|u_0\|_{H^2(\Omega)} + \|f\|_{H^1(0,T;L^2(\Omega))} \right).
\end{align*}
\end{thm}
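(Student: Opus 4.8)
The plan is to prove the three regularity levels successively, treating $q(t)\partial_t^\alpha u$, $c(x,t)u$ and $f$ as known inhomogeneous terms and combining a Galerkin approximation with fractional-calculus energy estimates. For the first claim, I would set up the standard Faedo--Galerkin scheme using the eigenfunctions $\{\varphi_n\}$ of $A$, obtaining finite-dimensional approximants $u_N$ solving a system of fractional ODEs; existence for the ODE system follows since the Caputo derivative appears in a lower-order, nondegenerate way after rewriting $\partial_t u_N = -Au_N + c u_N + f - q\partial_t^\alpha u_N$. The key a priori bound comes from testing with $u_N$: the term $\langle \partial_t u_N, u_N\rangle = \frac12\frac{d}{dt}\|u_N\|_{L^2(\Omega)}^2$, the elliptic term gives $\langle Au_N, u_N\rangle \geq \nu\|\nabla u_N\|_{L^2(\Omega)}^2$ by uniform ellipticity, the term $\langle c u_N, u_N\rangle$ is absorbed by $\|c\|_{L^\infty}\|u_N\|_{L^2(\Omega)}^2$, and $\langle f, u_N\rangle$ is controlled via the $H^{-1}$--$H_0^1$ duality and Young's inequality. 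The genuinely fractional ingredient is the coercivity/positivity inequality $\langle \partial_t^\alpha u_N, u_N\rangle \geq \tfrac12 \partial_t^\alpha \|u_N\|_{L^2(\Omega)}^2$ (the Alikhanov--Kubica-type inequality), so that after integrating in $t$ and using that $\partial_t^{-(1-\alpha)}$ applied to a nonnegative function is nonnegative (monotonicity of the Riemann--Liouville integral), the fractional term has a favorable sign provided $q \geq 0$; if $q$ can change sign one instead keeps it and absorbs $\int_0^t \|q\partial_t^\alpha u_N\|_{H^{-1}}\|u_N\|_{H_0^1}$, which forces an estimate of $\partial_t^\alpha u_N$ in terms of $\partial_t u_N$ from the equation. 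Then a fractional Grönwall inequality yields a uniform bound on $\|u_N\|_{L^\infty(0,T;L^2(\Omega))} + \|u_N\|_{L^2(0,T;H_0^1(\Omega))}$, whence $\partial_t u_N$ is bounded in $L^2(0,T;H^{-1}(\Omega))$ by comparison in the equation (noting $\partial_t^\alpha u_N = \partial_t^{-(1-\alpha)}\partial_t u_N$ is then also bounded in $L^2(0,T;H^{-1}(\Omega))$ by the boundedness of the Riemann--Liouville integral on $L^2(0,T)$). Passing to the limit by weak/weak-$*$ compactness and the Aubin--Lions lemma gives a solution with the stated regularity and estimate \eqref{esti-fst}; uniqueness follows by linearity, applying the same energy estimate to the difference with $u_0 = 0$, $f = 0$.

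For the second claim I would run the same Galerkin scheme but test with $Au_N$ (equivalently, differentiate the ODE structure and use the higher eigenfunction expansion). Now $\langle \partial_t u_N, Au_N\rangle = \tfrac12\frac{d}{dt}\|\nabla u_N\|^2$ in the appropriate sense, $\langle Au_N, Au_N\rangle = \|Au_N\|_{L^2(\Omega)}^2$ gives the elliptic regularity gain (and $\|u_N\|_{H^2(\Omega)} \leq C\|Au_N\|_{L^2(\Omega)}$ by standard elliptic theory since $\partial\Omega$ is smooth), the fractional term contributes $\langle \partial_t^\alpha u_N, Au_N\rangle \geq \tfrac12 \partial_t^\alpha\|\nabla u_N\|_{L^2(\Omega)}^2$ again by the coercivity inequality applied in the $A^{1/2}$-inner product, and $\langle c u_N + f, Au_N\rangle$ is handled by Cauchy--Schwarz and Young, absorbing $\tfrac12\|Au_N\|_{L^2(\Omega)}^2$, with the lower-order piece $\|\nabla(cu_N)\|_{L^2}$ or $\|c u_N\|_{L^2}$ bounded using $c\in L^\infty(0,T;W^{2,\infty}(\Omega))$ and the first-level estimate. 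After a fractional Grönwall argument this yields $\|u_N\|_{L^\infty(0,T;H_0^1(\Omega))} + \|u_N\|_{L^2(0,T;H^2(\Omega))}$ bounded by the data in the norms of \eqref{esti-sec}, and then $\partial_t u_N$ is bounded in $L^2(0,T;L^2(\Omega))$ by comparison (again using boundedness of the Riemann--Liouville fractional integral to control $q\partial_t^\alpha u_N$ in $L^2(0,T;L^2(\Omega))$). Limit passage and uniqueness are as before.

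For the third claim, the natural route is to differentiate the equation in $t$ and estimate $v = \partial_t u$: formally $\partial_t v + q\partial_t^\alpha v = -Av + cv + (\partial_t c)u + \partial_t f - (\partial_t q)\partial_t^\alpha u$, with initial value $v(0) = -Au_0 + c(\cdot,0)u_0 + f(\cdot,0)$, which lies in $L^2(\Omega)$ precisely because $u_0\in H^2(\Omega)\cap H_0^1(\Omega)$ and $f(\cdot,0)$ makes sense from $f\in H^1(0,T;L^2(\Omega))$. I would first justify this differentiation at the Galerkin level (where everything is smooth in $t$), derive the $L^\infty(0,T;L^2(\Omega))$ bound on $v_N$ by testing with $v_N$ and using the coercivity inequality plus fractional Grönwall — here one must carefully note that $\partial_t^\alpha(\partial_t u) = \partial_t^{1+\alpha}u$ needs the extra $t$-regularity, and the forcing terms $(\partial_t c)u$, $(\partial_t q)\partial_t^\alpha u$ are bounded in $L^2(0,T;L^2(\Omega))$ by the second-level estimate together with boundedness of the Riemann--Liouville integral. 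Given $\partial_t u \in L^\infty(0,T;L^2(\Omega))$, going back to the equation $Au = -\partial_t u - q\partial_t^\alpha u + cu + f$ and applying elliptic regularity pointwise in $t$ gives $u\in L^\infty(0,T;H^2(\Omega))$ with the stated bound. The main obstacle is the handling of the fractional term when $q$ is merely in $L^\infty(0,T)$ and possibly sign-changing: the clean positivity argument fails, so one must either invoke an additional structural assumption like $q\geq 0$ (common in this literature, and consistent with the physical model), or carry $q\partial_t^\alpha u$ as an inhomogeneity, bound $\partial_t^\alpha u = \partial_t^{-(1-\alpha)}\partial_t u$ in terms of $\partial_t u$ in the relevant space via the boundedness of the fractional integral, and then close the estimate through a fractional Grönwall inequality with kernel singularity $(t-\tau)^{-(1-\alpha)}$; making the constants in the generalized Grönwall inequality explicit and uniform in $N$ is the technically delicate point. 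A secondary subtlety is the rigorous justification of $\langle \partial_t^\alpha w, w\rangle \geq \tfrac12 \partial_t^\alpha\|w\|^2$ and of $\partial_t^\alpha(\partial_t u) = \partial_t^{1+\alpha} u$ at the level of regularity available, which I would handle by first proving all estimates for the smooth Galerkin approximants and only then passing to the limit.
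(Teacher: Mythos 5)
Your route (Faedo--Galerkin plus fractional energy estimates) is genuinely different from the paper's, which never tests the equation at all: the paper rewrites the problem as the fixed-point equation $u=F+Ku$ with $K$ the Duhamel operator carrying $cu-q\partial_t^\alpha u$, proves existence by compactness of $K$ on $H^{\alpha_1}(0,T;H^{-1}(\Omega))$ and the Fredholm alternative, and obtains all three regularity levels by bootstrapping the integral equation with the singular Gr\"onwall inequality, the top level via the iteration $u=K^Nu+\sum_{j<N}K^jF$ with $N(1-\alpha)\ge 1$. Your approach could in principle work, but as written it has two genuine gaps.

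First, your main mechanism for the fractional term --- the coercivity inequality $\langle\partial_t^\alpha u_N,u_N\rangle\ge\tfrac12\partial_t^\alpha\|u_N\|^2$ giving it a favorable sign --- is unavailable here: Theorem \ref{thm-uniq-exist} assumes only $q\in L^\infty(0,T)$, with no sign condition (positivity of $q$ is imposed only for Theorem \ref{thm-asymp-long}). You do flag this and propose the fallback of treating $q\partial_t^\alpha u_N=qJ^{1-\alpha}\partial_t u_N$ as an inhomogeneity, but the closure is more circular than you suggest: $\partial_t u_N$ is recovered from the equation only through $\|u_N(t)\|_{H_0^1}$ \emph{pointwise in $t$}, whereas the energy identity controls that quantity only in $L^2(0,T)$. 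One has to run the Gr\"onwall argument on the combined quantity $\|\partial_t u_N(t)\|_{H^{-1}}+\|u_N(t)\|_{H_0^1}$ with the weakly singular kernel $(t-s)^{-\alpha}$ (not $(t-s)^{-(1-\alpha)}$) and then integrate with Young's convolution inequality --- which is exactly the paper's Lemmata {\bf 3.6}--{\bf 3.8}; so the fallback, once made rigorous, abandons the energy method and reproduces the integral-equation bootstrap.

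Second, your third step differentiates the equation in $t$. This is not admissible under the stated hypotheses: $q\in L^\infty(0,T)$ and $c\in L^\infty(0,T;W^{2,\infty}(\Omega))$ carry no time derivatives, so the terms $(\partial_t q)\partial_t^\alpha u$ and $(\partial_t c)u$ need not exist. Moreover $\partial_t(\partial_t^\alpha u)=\partial_t^\alpha(\partial_t u)+\frac{t^{-\alpha}}{\Gamma(1-\alpha)}\partial_t u(0)$ for the Caputo derivative, so even for smooth coefficients the differentiated equation acquires a nonintegrable-looking singular forcing unless $\partial_t u(0)=0$, which you do not address. The paper sidesteps both issues: iterating $u=K^Nu+\sum_{j=0}^{N-1}K^jF$ enough times that the accumulated kernel $(t-\tau)^{N(1-\alpha)-1}$ is bounded yields the $W^{1,\infty}(0,T;L^2(\Omega))\cap L^\infty(0,T;H^2(\Omega))$ estimate using only $q\in L^\infty(0,T)$ and the improved parabolic regularity of $F$ (which is where $f\in H^1(0,T;L^2(\Omega))$ and $u_0\in H^2(\Omega)\cap H_0^1(\Omega)$ enter). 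To salvage your plan at this level you would either need to add time-regularity hypotheses on $q,c$ not present in the theorem, or replace the differentiation step by an iteration of the type the paper uses.
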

Here the constants $C_1,C_2,C_3$ depend on $\alpha, T, \nu$ and some norms of 
the coefficients $c$, $a_{ij}$ and $q$. 
Moreover, we mention that by a classical result for parabolic equations, we have actually 
$u\in C^1((0,T];L^2(\Omega))$ provided that the coefficients $q, c$ are sufficiently smooth, 
but we do not discuss the details here.

Next we propose the result for the long-time asymptotic behavior. 
For the asymptotic estimate, we assume further that $q,c$ are continuous in time variable, 
$q_0 \leq q(t) \leq q_1,\ t>0$ for some positive constants $q_1\ge q_0>0$ and 
$c(x,t)\leq 0$, $(x,t)\in \Omega\times (0,\infty)$. 
Then we have
\begin{thm}
\label{thm-asymp-long}
Assume that $f=0$, $u_0\in L^2(\Omega)$ and $q,c$ satisfy the above conditions. 
Let $u$ be the solution to the initial-boundary value problem \eqref{equ-u}--\eqref{equ-ibc}. 
Then for arbitrarily fixed $t_0>0$, there exists a constant $C>0$, depending only on 
$\alpha, q_1, \nu,\Omega$ and $t_0$, such that the following long-time asymptotic estimate
$$
\|u(\,\cdot\,,t)\|_{L^2(\Omega)}
\leq C\|u_0\|_{L^2(\Omega)}t^{-\alpha} 
$$
holds true for any $t\ge t_0$. 
\end{thm}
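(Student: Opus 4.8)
The plan is to reduce the initial-boundary value problem \eqref{equ-u}--\eqref{equ-ibc}, via a single energy estimate, to a scalar time-fractional differential inequality for $z(t):=\|u(\,\cdot\,,t)\|_{L^2(\Omega)}$ — crucially the \emph{norm} itself and not its square, which is exactly what will produce the exponent $t^{-\alpha}$ rather than $t^{-\alpha/2}$ — and then to extract the decay of $z$ from that inequality by an elementary integration. First I would establish the estimate for $u_0\in H_0^1(\Omega)$, where Theorem~\ref{thm-uniq-exist} provides $u\in H^1(0,T;L^2(\Omega))\cap L^2(0,T;H^2(\Omega)\cap H_0^1(\Omega))$ for every $T>0$, so that all the computations below are legitimate; the general case $u_0\in L^2(\Omega)$ then follows by density together with the stability bound \eqref{esti-fst}.

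Testing \eqref{equ-u} with $u$, integrating over $\Omega$ (recall $f=0$), and using the uniform ellipticity of $A$ together with the Poincaré inequality, $\int_\Omega\sum_{i,j}a_{ij}\partial_i u\,\partial_j u\,dx\ge\nu\|\nabla u\|_{L^2(\Omega)}^2\ge\lambda_1\|u\|_{L^2(\Omega)}^2$ with $\lambda_1:=\nu\,\lambda_1^{\mathrm{Dir}}(\Omega)$ depending only on $\nu$ and $\Omega$, and the sign condition $c\le 0$, one obtains
$$
\tfrac{1}{2}\tfrac{d}{dt}\|u\|_{L^2(\Omega)}^2+q(t)\,(\partial_t^\alpha u,u)_{L^2(\Omega)}+\lambda_1\|u\|_{L^2(\Omega)}^2\le 0 .
$$
The third ingredient is the coercivity inequality for the Caputo derivative: if $v$ is of class $H^1$ in time with values in a Hilbert space $H$ and $\Phi:H\to\mathbb{R}$ is convex and $C^1$, then $(\nabla\Phi(v(t)),\partial_t^\alpha v(t))_H\ge\partial_t^\alpha(\Phi\circ v)(t)$. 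Fix $\varepsilon>0$, put $z_\varepsilon(t):=(\|u(\,\cdot\,,t)\|_{L^2(\Omega)}^2+\varepsilon)^{1/2}$, and apply this with $H=L^2(\Omega)$ and $\Phi(w)=(\|w\|_{L^2(\Omega)}^2+\varepsilon)^{1/2}$, whose gradient is $w/(\|w\|_{L^2(\Omega)}^2+\varepsilon)^{1/2}$; since $\Phi(u(t))=z_\varepsilon(t)$ this gives $(\partial_t^\alpha u,u)_{L^2(\Omega)}\ge z_\varepsilon\,\partial_t^\alpha z_\varepsilon$. Because $\tfrac{1}{2}\tfrac{d}{dt}\|u\|_{L^2(\Omega)}^2=z_\varepsilon z_\varepsilon'$ and $q\ge0$, dividing the displayed inequality by $z_\varepsilon\ge\sqrt\varepsilon$ turns it into the scalar relation
$$
z_\varepsilon'(t)+q(t)\,\partial_t^\alpha z_\varepsilon(t)+\lambda_1\,z_\varepsilon(t)\le\lambda_1\sqrt\varepsilon,\qquad t>0,\qquad z_\varepsilon(0)=\big(\|u_0\|_{L^2(\Omega)}^2+\varepsilon\big)^{1/2} .
$$

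The remaining work is entirely with this scalar relation. First I would show, by a maximum principle for the time-fractional ODE, that $z_\varepsilon$ is non-increasing on $[0,\infty)$: if $z_\varepsilon$ took a value $>z_\varepsilon(0)$ at some $t^{*}>0$ maximal over $[0,t^{*}]$, then $z_\varepsilon'(t^{*})\ge0$, the fractional extremum principle would force $\partial_t^\alpha z_\varepsilon(t^{*})\ge\frac{(t^{*})^{-\alpha}}{\Gamma(1-\alpha)}\big(z_\varepsilon(t^{*})-z_\varepsilon(0)\big)>0$, and $\lambda_1 z_\varepsilon(t^{*})>\lambda_1\sqrt\varepsilon$, contradicting the relation; a local version of this argument gives monotonicity, so $z_\varepsilon'\le0$ a.e.\ and $\partial_t^\alpha z_\varepsilon\le0$. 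Integrating the relation over $(0,t)$, then using $z_\varepsilon'\le0$, the bound $0\le\int_\tau^t q(s)(s-\tau)^{-\alpha}\,ds\le q_1(t-\tau)^{1-\alpha}/(1-\alpha)$, one integration by parts, and $z_\varepsilon\ge0$, I get $\int_0^t q(s)\partial_t^\alpha z_\varepsilon(s)\,ds\ge-q_1 z_\varepsilon(0)\,t^{1-\alpha}/\Gamma(2-\alpha)$, hence $\lambda_1\int_0^t z_\varepsilon(s)\,ds\le z_\varepsilon(0)\big(1+q_1 t^{1-\alpha}/\Gamma(2-\alpha)\big)+\lambda_1\sqrt\varepsilon\,t$. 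Since $z_\varepsilon$ is non-increasing, $t\,z_\varepsilon(t)\le\int_0^t z_\varepsilon(s)\,ds$, so $z_\varepsilon(t)\le\frac{z_\varepsilon(0)}{\lambda_1 t}\big(1+q_1 t^{1-\alpha}/\Gamma(2-\alpha)\big)+\sqrt\varepsilon$. Letting $\varepsilon\to0^{+}$ gives $\|u(\,\cdot\,,t)\|_{L^2(\Omega)}\le\frac{\|u_0\|_{L^2(\Omega)}}{\lambda_1}\big(t^{-1}+q_1 t^{-\alpha}/\Gamma(2-\alpha)\big)$, and for $t\ge t_0$ the elementary inequality $t^{-1}\le t_0^{\alpha-1}t^{-\alpha}$ yields the assertion with $C=\lambda_1^{-1}\big(t_0^{\alpha-1}+q_1/\Gamma(2-\alpha)\big)$, depending only on $\alpha$, $q_1$, $\nu$, $\Omega$ and $t_0$.

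The step I expect to require the most care, and the reason for going through $u_0\in H_0^1(\Omega)$ first, is the rigorous justification of the coercivity inequality $(\partial_t^\alpha u,u)_{L^2(\Omega)}\ge z_\varepsilon\partial_t^\alpha z_\varepsilon$: it needs $t\mapsto u(\,\cdot\,,t)$ to be $H^1$ in time with values in $L^2(\Omega)$, so that $z_\varepsilon$ is absolutely continuous, $\partial_t^\alpha z_\varepsilon$ is a genuine Caputo derivative, and the integration by parts underlying the inequality is valid — none of which holds for merely $L^2(\Omega)$ data, where $\partial_t u$ lies only in $H^{-1}(\Omega)$. A secondary subtlety is that the maximum-principle step must be applied to an absolutely continuous function and to a maximum possibly attained at an endpoint. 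The remaining items — the energy identity, the computation of $\int_0^t q\,\partial_t^\alpha z_\varepsilon$ with the $t$-dependent weight $q(s)$, and the limit $\varepsilon\to0$ — are routine.
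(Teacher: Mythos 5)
Your reduction to the scalar inequality is sound and, up to that point, runs parallel to the paper: the energy identity, the ellipticity/Poincar\'e step, and the coercivity $(\partial_t^\alpha u,u)\ge z_\varepsilon\,\partial_t^\alpha z_\varepsilon$ (your $\varepsilon$-regularized convex-function version of the paper's Lemma 4.1 is a clean way to avoid the division-by-zero issue that the paper handles separately around \eqref{eq5}--\eqref{eq6}). The genuine problem is the sentence ``a local version of this argument gives monotonicity, so $z_\varepsilon'\le 0$ a.e.'' The fractional extremum principle you invoke ($\partial_t^\alpha z_\varepsilon(t^*)\ge \frac{(t^*)^{-\alpha}}{\Gamma(1-\alpha)}(z_\varepsilon(t^*)-z_\varepsilon(0))$) is valid only at a point $t^*$ where $z_\varepsilon$ attains its maximum over the \emph{entire history} $[0,t^*]$. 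It therefore yields $z_\varepsilon(t)\le z_\varepsilon(0)$, but it does not exclude a local rebound: at a local maximum $\bar t$ reached after an initial decrease, the memory integral $\frac{1}{\Gamma(1-\alpha)}\int_0^{\bar t}(\bar t-\tau)^{-\alpha}z_\varepsilon'(\tau)\,d\tau$ can be strongly negative, so the inequality $z_\varepsilon'\le -q\,\partial_t^\alpha z_\varepsilon-\lambda_1 z_\varepsilon+\lambda_1\sqrt\varepsilon$ is perfectly compatible with $z_\varepsilon'>0$ there. Monotonicity of $z_\varepsilon$ is thus not a consequence of the scalar differential inequality via the argument you give, and both of your subsequent key steps rest on it: the pointwise sign $\partial_t^\alpha z_\varepsilon\le 0$ (needed to replace $q(s)$ by $q_1$ inside $\int_0^t q\,\partial_s^\alpha z_\varepsilon\,ds$) and the averaging inequality $t\,z_\varepsilon(t)\le\int_0^t z_\varepsilon(s)\,ds$. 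The first of these can be recovered independently --- this is exactly the paper's Lemma 4.3, which is proved by a nontrivial detour through the Riemann--Liouville extremum principle applied to $\widetilde z=\partial_t^\alpha z$ after a Mittag--Leffler regularization --- but the second cannot: one checks that $\partial_t^\alpha z\le 0$ alone does \emph{not} imply $z(t)\le\frac1t\int_0^t z$, since the kernel relating the two changes sign. So the endgame of your argument is broken as written.

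For comparison, the paper never needs monotonicity: after establishing \eqref{eq6} and the sign $\partial_t^\alpha\|u_N(t)\|_{L^2(\Omega)}\le 0$ (Lemma 4.3), it compares $\|u_N(t)\|_{L^2(\Omega)}$ with the solution $v$ of the constant-coefficient auxiliary problem \eqref{equ-ode} via the comparison principle of Lemma 4.2, and extracts the decay $v(t)\le C\|u_0\|_{L^2(\Omega)}t^{-\alpha}$ from an explicit Laplace-transform/Hankel-contour analysis (Lemma A.1). Your ``integrate, then average'' endgame is more elementary and would be a nice alternative to the appendix \emph{if} you could prove that $t\mapsto\|u_N(t)\|_{L^2(\Omega)}$ is non-increasing, but that requires an additional idea (it is true for the constant-coefficient model, where the solution is even completely monotone, but it does not follow from the differential inequality by an extremum-principle argument). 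Two secondary points: your $H^1(0,T;L^2(\Omega))$ regularity is marginal for the pointwise extremum arguments (the paper works with truncated data giving $W^{1,\infty}$ in time and still mollifies in Lemma 4.2 and approximates by $C^2$ functions in Lemma 4.3), and the final density passage to $u_0\in L^2(\Omega)$ is fine via the $C([0,T];L^2(\Omega))$ continuity.
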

We also mention that the decay rate is the best possible. 
In fact, we can consider a special case that $q$ is a positive constant, $c$ is a nonnegative constant 
and $a_{ij}=\delta_{ij}$, then by the eigenfunction expansion method, 
we find the long-time asymptotic behavior of the solution 
is exactly $t^{-\alpha}$.

For a diffusion equation with time-fractional derivatives, in general, 
the decay rate is characterized by the lowest order of the derivatives (see e.g., \cite{LLY15}). 
In this paper, for simplicity, we are devoted to the case of two time derivatives with the orders 
$1$ and $\alpha\in (0,1)$, and 
the decay rate is never exponential unlike the case of only the first-order time derivative, 
but is $t^{-\alpha}$ as Theorem 2.2 proves. 
Moreover, if we consider the equation \eqref{equ-u} with more than one time-fractional derivatives, 
then the decay rate is subject to the lowest order, which we describe as a concluding remark in 
Section {\bf 5}.

%%%%%%%%%%%%%%%%%%%%%%%%%%%%%%%%%%%%%%%%%%%%%%%%%
%%%%%%%%%%%%%%%%%%%%%%%%%%%%%%%%%%%%%%%%%%%%%%%%%
\section{Unique existence and regularity of solution} \label{sec-unique-existence}
In this section, we first prove the unique existence of solution to the initial-boundary value problem 
{\eqref{equ-u}--\eqref{equ-ibc} in the space $H^{\alpha_1}(0,T; H^{-1}(\Omega))$ with arbitrarily fixed 
$1>\alpha_1>\max\{\alpha,\frac12\}$.
The proof is based on the classical unique existence of solution to parabolic equations and 
the Fredholm alternative. 
Next we also propose some improved regularity of the solution and establish the related estimates 
by employing generalized Gr\"onwall's inequality.

\subsection{Preliminary}
Before giving the proofs of our main results, we start with some useful representations of the solution 
$u$ to the initial-boundary value problem \eqref{equ-u}--\eqref{equ-ibc}. 

Because of the conditions imposed on the elliptic operator ${A}$, there exists a
system of eigenfunctions: 
$\{\varphi_k\}_{k=1}^\infty$, $\varphi_k \in H^2(\Omega)\cap H^1_0(\Omega)$ 
which satisfies the relations 
${A}\varphi_k=\lambda_k\varphi_k,\ k=1,2,\ldots$ 
and forms an orthonormal basis of $L^2(\Omega)$. 
The corresponding eigenvalues 
$\lambda_k,\ k=1,2,\ldots$ are all positive: 
$0<\lambda_1\le \lambda_2\leq\ldots$ and $\lambda_k\to\infty$ as $k\to\infty$. 
Henceforth, $\langle \cdot ,\cdot \rangle$ denotes the scalar product between 
$H^{-1}(\Omega)$ and $H_0^1(\Omega)$.

Moreover, we define the Mittag-Leffler function by
$$
E_{\alpha,\gamma}(z):=\sum_{k=0}^\infty \frac{z^k}{\Gamma(\alpha k+\gamma)},\ z\in\mathbb C,
$$
where $\alpha,\gamma>0$ are arbitrary constants. 
The following useful lemmata hold.
\begin{lem}
Let the constants $\alpha\in (0,1)$ and $\mu>0$ be given. Then the following equalities
\begin{equation}
\label{eq-ml1}
\partial_t E_{\alpha,1}(-\mu t^\alpha) = -\mu t^{\alpha-1} E_{\alpha,\alpha}(-\mu t^\alpha),
\end{equation}
and
\begin{equation}
\label{eq-ml2}
\partial_t^\alpha E_{\alpha,1}(-\mu t^\alpha) = -\mu E_{\alpha,1}(-\mu t^\alpha),
\end{equation}
are valid for any $t>0$.
\end{lem}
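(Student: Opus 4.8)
The plan is to work entirely from the power-series definition and to perform every manipulation termwise, justifying the interchanges by local uniform convergence. Since $E_{\alpha,1}$ is an entire function, the series $E_{\alpha,1}(-\mu t^\alpha) = \sum_{k=0}^\infty \frac{(-\mu)^k t^{\alpha k}}{\Gamma(\alpha k+1)}$ converges uniformly on every bounded $t$-interval, and the formally differentiated series converges uniformly on compact subsets of $(0,\infty)$; this legitimizes term-by-term differentiation. For \eqref{eq-ml1}, the $k=0$ term is constant, and for $k\ge 1$ the identity $\Gamma(\alpha k+1)=\alpha k\,\Gamma(\alpha k)$ gives $\partial_t\frac{(-\mu)^k t^{\alpha k}}{\Gamma(\alpha k+1)} = \frac{(-\mu)^k t^{\alpha k-1}}{\Gamma(\alpha k)}$. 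Summing over $k\ge 1$ and reindexing by $j=k-1$ factors out $-\mu t^{\alpha-1}$ and leaves $\sum_{j\ge 0}\frac{(-\mu)^j t^{\alpha j}}{\Gamma(\alpha j+\alpha)} = E_{\alpha,\alpha}(-\mu t^\alpha)$, which is \eqref{eq-ml1}.

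For \eqref{eq-ml2}, I would substitute the expression for $\frac{d}{d\tau}E_{\alpha,1}(-\mu\tau^\alpha)$ supplied by \eqref{eq-ml1} into the Caputo integral, then interchange the sum and the integral. This step is justified by Tonelli's theorem: for each $k\ge 1$ the factor $\tau^{\alpha k-1}$ is integrable near $\tau=0$ because $\alpha k-1>-1$, while $(t-\tau)^{-\alpha}$ is integrable near $\tau=t$ because $\alpha<1$. One then evaluates the Beta integral by the change of variables $\tau = ts$, namely $\int_0^t (t-\tau)^{-\alpha}\tau^{\alpha k-1}\,d\tau = t^{\alpha k-\alpha}B(\alpha k,1-\alpha) = t^{\alpha k-\alpha}\frac{\Gamma(\alpha k)\Gamma(1-\alpha)}{\Gamma(\alpha k-\alpha+1)}$. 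The factors $\Gamma(\alpha k)$ and $\Gamma(1-\alpha)$ cancel against the prefactor $\frac{1}{\Gamma(1-\alpha)\Gamma(\alpha k)}$, leaving $\sum_{k\ge 1}\frac{(-\mu)^k t^{\alpha k-\alpha}}{\Gamma(\alpha k-\alpha+1)}$; a final shift $j=k-1$, using $\Gamma(\alpha k-\alpha+1)=\Gamma(\alpha j+1)$, identifies this with $-\mu E_{\alpha,1}(-\mu t^\alpha)$.

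The arithmetic here is routine; the only point requiring genuine care is the analytic bookkeeping — confirming the local uniform convergence that permits termwise differentiation, and verifying integrability at the two endpoints $\tau=0$ and $\tau=t$ before applying Fubini--Tonelli. Neither of these is a real obstacle, so the proof is essentially a bookkeeping exercise organized around the two identities $\Gamma(\alpha k+1)=\alpha k\,\Gamma(\alpha k)$ and the Beta-integral evaluation.
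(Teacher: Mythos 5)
Your argument is correct and complete; the paper gives no proof of this lemma at all (it simply cites Podlubny), and your termwise series computation --- $\Gamma(\alpha k+1)=\alpha k\,\Gamma(\alpha k)$, the reindexing $j=k-1$, and the Beta-integral evaluation $\int_0^t(t-\tau)^{-\alpha}\tau^{\alpha k-1}\,d\tau=t^{\alpha k-\alpha}B(\alpha k,1-\alpha)$ --- is exactly the standard derivation found in that reference. One cosmetic point: since the series has alternating signs, the sum--integral interchange in \eqref{eq-ml2} is a Fubini step that should be justified by checking that the series of termwise integrals of absolute values, $\sum_{k\ge1}\mu^k t^{\alpha k-\alpha}/\Gamma(\alpha k-\alpha+1)$, converges (which it does), rather than only the per-term integrability that Tonelli gives you.
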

We refer to Podlubny \cite{P99} for the proof.

\begin{lem}
\label{lem-ML}
Let $0<\alpha<2$ and $\gamma>0$. We suppose that $\frac{\pi\alpha}{2}<\mu<\min\{\pi,\pi\alpha\}$. 
Then there exists a constant $C= C(\alpha,\gamma,\mu)>0$ such that
\begin{equation}
\label{esti-ML}
|E_{\alpha,\gamma}(z)| \le \frac{C}{1+|z|},
\quad \mu\le |\arg z|\le\pi.
\end{equation}
\end{lem}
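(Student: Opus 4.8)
The plan is to derive the large-$|z|$ asymptotics of $E_{\alpha,\gamma}$ in the sector $\mu\le|\arg z|\le\pi$ from a Hankel contour representation, and then to absorb the bounded-$|z|$ region using the analyticity of $E_{\alpha,\gamma}$; this is the classical argument of Podlubny, which I would carry out as follows.

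First I would record the integral representation
$$
E_{\alpha,\gamma}(z)=\f{1}{2\pi\alpha\i}\int_{\mathrm{Ha}(\be)}\f{\e^{\zeta^{1/\alpha}}\,\zeta^{(1-\gamma)/\alpha}}{\zeta-z}\,\d\zeta,
$$
where $\mathrm{Ha}(\be)$ is the Hankel path formed by the two rays $\arg\zeta=\pm\be$, $|\zeta|\ge\ep$, joined by the arc $|\zeta|=\ep$, $|\arg\zeta|\le\be$, oriented counterclockwise, with the principal branch of $\zeta^{1/\alpha}$. This follows from the series definition by inserting the Hankel formula $\f{1}{\Ga(s)}=\f{1}{2\pi\i}\int_{\mathrm{Ha}}\e^{t}t^{-s}\,\d t$ term by term (substitute $t=\zeta^{1/\alpha}$). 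The decisive step is the choice of the opening angle: since $\f{\pi\alpha}{2}<\mu<\min\{\pi,\pi\alpha\}$, I may fix $\be$ with $\f{\pi\alpha}{2}<\be<\mu$. This guarantees, on one hand, that along the rays $\Re(\zeta^{1/\alpha})=|\zeta|^{1/\alpha}\cos(\be/\alpha)<0$, because $\be/\alpha>\f{\pi}{2}$, so that the integral converges; and, on the other hand, that for every $z$ with $\mu\le|\arg z|\le\pi$ the pole $\zeta=z$ lies strictly outside the sector $|\arg\zeta|<\be$, so that no residue contributes.

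Next I would extract the algebraic expansion. Writing $\f{1}{\zeta-z}=-\sum_{k=0}^{p-1}\f{\zeta^{k}}{z^{k+1}}-\f{\zeta^{p}}{z^{p}(\zeta-z)}$ and integrating term by term, the Hankel formula identifies each finite contribution as $-\f{z^{-(k+1)}}{\Ga(\gamma-\alpha(k+1))}$, so that
$$
E_{\alpha,\gamma}(z)=-\sum_{j=1}^{p}\f{z^{-j}}{\Ga(\gamma-\alpha j)}+R_{p}(z).
$$
Here $p=1$ already suffices, the leading term being $-\f{z^{-1}}{\Ga(\gamma-\alpha)}=O(|z|^{-1})$. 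The main obstacle is the \emph{uniform} control of $R_{p}(z)$ over the full sector. The key geometric fact is that the angular gap $|\arg z|-\be\ge\mu-\be>0$ forces $|\zeta-z|\ge c_{0}(|\zeta|+|z|)$ on $\mathrm{Ha}(\be)$ for a constant $c_{0}=c_{0}(\alpha,\mu)>0$ (by the law of cosines on the rays, and trivially on the arc for $|z|\ge2\ep$). Combined with the ray decay $|\e^{\zeta^{1/\alpha}}|=\e^{|\zeta|^{1/\alpha}\cos(\be/\alpha)}$, a direct majorization of $\int_{\mathrm{Ha}(\be)}|\e^{\zeta^{1/\alpha}}|\,|\zeta|^{(1-\Re\gamma)/\alpha+p}\,|z|^{-p}\,|\zeta-z|^{-1}\,|\d\zeta|$ yields $R_{p}(z)=O(|z|^{-p-1})$ uniformly for $\mu\le|\arg z|\le\pi$. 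With $p=1$ this produces constants $R_{0}>0$ and $C'>0$, depending only on $\alpha,\gamma,\mu$, such that $|E_{\alpha,\gamma}(z)|\le C'|z|^{-1}$ whenever $|z|\ge R_{0}$ and $\mu\le|\arg z|\le\pi$.

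Finally I would dispose of the complementary region $|z|\le R_{0}$. As the defining series has infinite radius of convergence, $E_{\alpha,\gamma}$ is entire and hence bounded by some $M$ on the compact set $\{|z|\le R_{0}\}$, where $\f{1}{1+|z|}\ge\f{1}{1+R_{0}}$ gives $|E_{\alpha,\gamma}(z)|\le M(1+R_{0})\,\f{1}{1+|z|}$. For $|z|\ge R_{0}$ the elementary inequality $\f{1}{|z|}\le(1+R_{0}^{-1})\,\f{1}{1+|z|}$ upgrades the large-$|z|$ bound to $|E_{\alpha,\gamma}(z)|\le C'(1+R_{0}^{-1})\,\f{1}{1+|z|}$. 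Taking $C=\max\{M(1+R_{0}),\,C'(1+R_{0}^{-1})\}$ then establishes $|E_{\alpha,\gamma}(z)|\le\f{C}{1+|z|}$ on the whole sector $\mu\le|\arg z|\le\pi$, with $C$ depending only on $\alpha,\gamma,\mu$, as claimed.
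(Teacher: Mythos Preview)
Your argument is correct and is precisely the classical Podlubny proof (Hankel-contour representation, algebraic expansion of $1/(\zeta-z)$, remainder control via the angular gap, then compactness for small $|z|$). The paper does not supply its own proof of this lemma at all: it simply cites Podlubny \cite{P99}, p.~35, and Gorenflo--Mainardi \cite{GM08}. So your write-up is not a different route but rather an explicit version of the very argument the paper defers to; there is nothing to compare.

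Two minor cosmetic points: you write $|\zeta|^{(1-\Re\gamma)/\alpha}$ although the statement assumes $\gamma>0$ real, so $\Re\gamma=\gamma$; and the phrase ``follows from the series definition by inserting the Hankel formula term by term'' glosses over the fact that the geometric-series step requires $|z|<|\zeta|$ on the contour, so strictly speaking one first establishes the representation for small $|z|$ (or with the contour pushed out) and then extends it by analytic continuation in $z$ to the sector $\mu\le|\arg z|\le\pi$, where the pole stays outside $\mathrm{Ha}(\beta)$. This is standard and does not affect the validity of your argument.
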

The proof can be found in Gorenflo and Mainardi \cite{GM08}, or in Podlubny \cite{P99} on p.35.

In view of $\{\lambda_k,\varphi_k\}_{k=1}^\infty$, the solution $u$ to \eqref{equ-u} and \eqref{equ-ibc}   
can be rewritten as follows: %an integral equation as follows:
\begin{equation}\label{sol-u}
\begin{aligned}
u(t) =&\; e^{-tA}u_0 + \int_0^t e^{-(t-s)A} f(s) ds \\
&+ \int_0^t e^{-(t-s)A}\left(c(s)u(s)-q(s) \partial_s^\alpha u(s)\right) ds
\end{aligned}
\end{equation}
where the operator $e^{-tA}$, $t\ge0$, is defined by 
\begin{equation}\label{def-e^tL}
e^{-tA}g := 
\sum_{k=1}^\infty e^{-\lambda_k t}\langle g,\varphi_k \rangle\varphi_k, \quad g\in H^{-1}(\Omega).
\end{equation}
We denote
$$
F(t) := e^{-tA}u_0 + \int_0^t e^{-(t-s)A} f(s) ds,
$$
and
\begin{equation}
\label{def-K}
Ku(t) := \int_0^t e^{-(t-s)A}\left(c(s)u(s)-q(s) \partial_s^\alpha u(s)\right) ds,\quad u\in D(K),
\end{equation}
where $D(K):= H^{\alpha_1} (0,T;H^{-1}(\Omega))$ with $1>\alpha_1>\max\{\alpha,\frac12\}$. 
From \eqref{sol-u}, we obtain
\begin{equation}\label{equ-Ku}
u(t) = F(t) + Ku(t).
\end{equation}
Here and henceforth, $\partial_t^\alpha$ means the fractional derivative whose domain is extended to 
the fractional Sobolev space $H^{\alpha_1}(0,T)$. 
For the detailed descriptions, we refer to Gorenflo, Luchko and Yamamoto \cite{GLY15} and 
Kubica and Yamamoto \cite{KY20}.

\subsection{Unique existence}
In this subsection, we shall prove the unique existence of solution in $H^{\alpha_1}(0,T; V)$ with 
$V=H^{-1}(\Omega)$. 
For the case where $V=L^2(\Omega)$, we can apply similar argument as follows. 
Thus, for the sake of simplicity, we omit the proof of the case $V=L^2(\Omega)$ in this paper.

According to the regularity assumptions on the coefficients $c,q$, 
we see that $cu-q\partial_t^\alpha u \in L^2(0,T; H^{-1}(\Omega))$ provided $u\in D(K)$. 
It is readily to check that $Ku$ is the solution to the following parabolic equation: 
\begin{equation}\label{equ-para}
\begin{cases}
\!\begin{alignedat}{2}
& \partial_t Ku + A Ku = cu - q\partial_t^\alpha u, 
&\quad & \mbox{ in }\Omega\times(0,T),\\
& Ku = 0, &\quad & \mbox{ on }\partial\Omega\times (0,T), \\
& Ku(\cdot,0) = 0, &\quad & \mbox{ in }\Omega,
\end{alignedat}
\end{cases}
\end{equation}
and then by the well-known regularity for parabolic equations 
(e.g., \cite[Section~{\bf 4.7.1}, p.243]{LM1}), 
we have $Ku\in H^1(0,T; H^{-1}(\Omega))\cap L^2(0,T; H_0^1(\Omega))$. 
By Theorem~{\bf 2.1} in \cite{T79} and Theorem~{\bf 16.2}, Chapter~{\bf 1} in \cite{LM1}, 
we find that $H^1(0,T; H^{-1}(\Omega))\cap L^2(0,T; L^2(\Omega))$ is compact in 
$H^{\alpha_1}(0,T; H^{-1}(\Omega))$, 
which implies $K: H^{\alpha_1}(0,T; H^{-1}(\Omega)) \rightarrow H^{\alpha_1}(0,T; H^{-1}(\Omega))$ 
is a compact operator. 
By the Fredholm alternative, \eqref{equ-Ku} admits a unique solution in 
$H^{\alpha_1}(0,T; H^{-1}(\Omega))$ as long as 
$$
\mbox{(\romannumeral1)}\quad F\in H^{\alpha_1}(0,T; H^{-1}(\Omega)), \hspace{4.7cm}
$$  
$$
\mbox{(\romannumeral2)}\quad I-K \mbox{ is one-to-one on } H^{\alpha_1}(0,T; H^{-1}(\Omega)), 
\mbox{ that is,} 
$$
$$
(I-K)v = 0 \ \mbox{ implies } \ v=0, \hspace{3cm}
$$
are valid. 
Here $I$ denotes the identity operator. Noting that $F$ is the solution to 
\begin{equation}\label{equ-F}
\begin{cases}
\!\begin{alignedat}{2}
& \partial_t F(x,t) + A F(x,t) = f(x,t), 
&\quad & (x,t)\in \Omega\times (0,T),\\
& F(x,t) = 0, &\quad & (x,t)\in \partial\Omega\times (0,T), \\
& F(x,0) = u_0, &\quad & x\in \Omega,
\end{alignedat}
\end{cases}
\end{equation}
by the regularity assumptions $u_0\in L^2(\Omega)$, $f\in L^2(0,T; H^{-1}(\Omega))$, 
we have $F\in H^1(0,T; H^{-1}(\Omega)) \subset H^{\alpha_1}(0,T; H^{-1}(\Omega))$. 
Thus, (\romannumeral1) is verified. 

Next we check (\romannumeral2). Indeed we show the following uniqueness result.
\begin{lem}
\label{lem-K}
Assume $v\in D(K)$ satisfies the following integral equation:
$$
v=Kv. 
$$
where the operator $K$ is defined in \eqref{def-K}. Then 
$$
v=0.
$$
\end{lem}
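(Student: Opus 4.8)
The plan is to turn the integral identity $v=Kv$ into the homogeneous initial-boundary value problem and then to annihilate $v$ step by step on short time intervals, the point being that on a short interval the term $q(t)\partial_t^\alpha v$ can be absorbed as a genuine lower-order perturbation of the parabolic part. First I would upgrade the regularity of $v$: since $v\in D(K)=H^{\alpha_1}(0,T;H^{-1}(\Omega))$ and, as recalled around \eqref{equ-para}, $Kv\in H^1(0,T;H^{-1}(\Omega))\cap L^2(0,T;H_0^1(\Omega))$, the identity $v=Kv$ immediately places $v$ in the latter space; in particular $v\in C([0,T];L^2(\Omega))$ with $v(0)=Kv(0)=0$ and $\partial_t v\in L^2(0,T;H^{-1}(\Omega))$. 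Reading \eqref{equ-para} with $u$ replaced by $v$, the function $v$ solves
\[
\partial_t v+Av=c(t)v-q(t)\,\partial_t^\alpha v\ \ \text{in }L^2(0,T;H^{-1}(\Omega)),\qquad v(0)=0,
\]
and, since $v\in H^1(0,T;H^{-1}(\Omega))$, the Caputo derivative here equals $I^{1-\alpha}\partial_t v$, where $I^{\beta}$ denotes the Riemann--Liouville integral of order $\beta$.

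Next I would show that $v\equiv0$ on a short interval $[0,\delta]$. By the standard energy estimate for the above parabolic problem on $(0,\delta)$ (the same estimate already used to obtain the mapping property of $K$), there is a constant $C_0$ independent of $\delta\in(0,T]$ with
\[
\|\partial_t v\|_{L^2(0,\delta;H^{-1}(\Omega))}\le C_0\,\bigl\|c(t)v-q(t)\,\partial_t^\alpha v\bigr\|_{L^2(0,\delta;H^{-1}(\Omega))}.
\]
The right-hand side is at most $C_0\bigl(C\delta+C\delta^{1-\alpha}\bigr)\,\|\partial_t v\|_{L^2(0,\delta;H^{-1}(\Omega))}$, with $C$ depending only on $\alpha$ and suitable norms of $c$ and $q$: indeed $v(0)=0$ gives $\|v(t)\|_{H^{-1}(\Omega)}\le t^{1/2}\|\partial_t v\|_{L^2(0,t;H^{-1}(\Omega))}$, multiplication by $c(t)$ is bounded on $H^{-1}(\Omega)$ uniformly in $t$ (because $c$ is Lipschitz in $x$), and $\|I^{1-\alpha}\|_{L^2(0,\delta)\to L^2(0,\delta)}\le\delta^{1-\alpha}/\Gamma(2-\alpha)$. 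Choosing $\delta>0$ so small that $C_0(C\delta+C\delta^{1-\alpha})<1$, which is possible since $1-\alpha>0$, forces $\partial_t v=0$ on $(0,\delta)$, hence $v\equiv v(0)=0$ on $[0,\delta]$.

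Finally I would bootstrap to $[0,T]$, and this is the only place requiring care, precisely because the Caputo derivative is nonlocal in time so that ``local uniqueness plus continuation'' is not automatic. The remedy is that the memory is erased once $v$ has vanished: if $v\equiv0$ on $[0,k\delta]$, then $\partial_\tau v(\tau)=0$ for $\tau<k\delta$, so for $t\ge k\delta$ the integral defining $\partial_t^\alpha v(t)$ runs only over $[k\delta,t]$, and hence $\tilde v(\sigma):=v(k\delta+\sigma)$ satisfies on $(0,\delta)$ exactly the problem of the first step, with $\tilde v(0)=v(k\delta)=0$ and with coefficients obeying the same bounds. Since the $\delta$ produced above depends only on $\alpha$, $\nu$, $\Omega$, $T$ and suitable norms of $c,q$, and not on the base point $k\delta$, the short-interval argument applies verbatim and yields $v\equiv0$ on $[k\delta,(k+1)\delta]$; after finitely many steps $v\equiv0$ on $[0,T]$, which is the assertion. (Alternatively the iteration can be avoided by carrying out the short-interval estimate once in an exponentially weighted norm $\sup_t e^{-\sigma t}\|\,\cdot\,\|$ and letting $\sigma\to\infty$, but the stepwise version is more transparent.)
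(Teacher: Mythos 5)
Your argument is correct, but it proves the lemma by a genuinely different route from the paper. The paper stays at the level of the semigroup representation $Kv(t)=\int_0^t e^{-(t-s)A}(\cdots)\,ds$: using the Mittag--Leffler bounds of Lemmata~{\bf 3.4}--{\bf 3.5} it derives pointwise-in-$t$ inequalities for $\|v(t)\|_{H^{-1}(\Omega)}$ and $\|\partial_t^{\alpha_1}v(t)\|_{H^{-1}(\Omega)}$, converts the intermediate derivative $\partial_t^\alpha v$ into $\partial_t^{\alpha_1}v$ via the semigroup property of $J^\gamma$, and closes a single weakly singular integral inequality for the sum of these two quantities on all of $(0,T)$, which the generalized Gr\"onwall lemma (Henry) annihilates in one stroke; no continuation argument is needed because the inequality is global in time. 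You instead first upgrade $v$ to $H^1(0,T;H^{-1}(\Omega))$ with $v(0)=0$ via parabolic maximal regularity (legitimate, since the paper has already recorded that $cv-q\partial_t^\alpha v\in L^2(0,T;H^{-1}(\Omega))$ for $v\in D(K)$), then absorb the right-hand side on a short interval using $\|v(t)\|_{H^{-1}}\le t^{1/2}\|\partial_t v\|_{L^2(0,t;H^{-1})}$ and the Young bound $\|J^{1-\alpha}\|_{L^2\to L^2}\le \delta^{1-\alpha}/\Gamma(2-\alpha)$, and continue stepwise; your explicit observation that the Caputo memory is erased once $v$ vanishes on an initial segment is exactly the point that makes the continuation legitimate despite the nonlocality, and the uniformity of $\delta$ in the base point is correctly justified. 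What your version buys is elementarity: it dispenses with the Mittag--Leffler estimates and the singular Gr\"onwall lemma, at the cost of the bootstrapping. What the paper's version buys is a one-shot global argument that also works verbatim with the weaker a priori regularity $v\in H^{\alpha_1}(0,T;H^{-1}(\Omega))$, without first invoking the smoothing of $K$.
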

To prove this result, we need several lemmata.
\begin{lem}
\label{lem-exp}
Let $0\le\beta<1$, $0\le s\le t$. Then
\begin{equation}
\label{eq-e}
\int_s^t \frac{(t-\tau)^{-\beta}}{\Gamma(1-\beta)} e^{-\lambda(\tau-s)} d\tau 
= (t-s)^{1-\beta} E_{1,2-\beta} (-\lambda(t-s)).
\end{equation}
\end{lem}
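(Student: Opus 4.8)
The plan is to reduce the identity to a term-by-term computation after expanding the exponential into a power series. First I would substitute $\tau = s + \sigma$ and set $h := t - s \ge 0$, which rewrites the left-hand side as $\frac{1}{\Gamma(1-\beta)}\int_0^h (h-\sigma)^{-\beta} e^{-\lambda\sigma}\,d\sigma$; this isolates the structure as the Riemann--Liouville convolution of the kernel $\sigma^{-\beta}/\Gamma(1-\beta)$ with $e^{-\lambda\sigma}$. Next I would insert $e^{-\lambda\sigma} = \sum_{k=0}^\infty (-\lambda)^k\sigma^k/k!$ and interchange summation and integration. Each resulting integral $\int_0^h (h-\sigma)^{-\beta}\sigma^k\,d\sigma$ is an Euler Beta integral: the substitution $\sigma = hr$ gives $h^{k+1-\beta}B(k+1,1-\beta) = h^{k+1-\beta}\,\Gamma(k+1)\Gamma(1-\beta)/\Gamma(k+2-\beta)$. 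Substituting back, the factors $\Gamma(1-\beta)$ and $k! = \Gamma(k+1)$ cancel, leaving $\sum_{k=0}^\infty (-\lambda)^k h^{k+1-\beta}/\Gamma(k+2-\beta) = h^{1-\beta}\sum_{k=0}^\infty (-\lambda h)^k/\Gamma(k+2-\beta)$, which is precisely $h^{1-\beta}E_{1,2-\beta}(-\lambda h)$ by the definition of the Mittag-Leffler function with the parameters $\alpha = 1$ and $\gamma = 2-\beta$. Recalling $h = t - s$ finishes the proof.

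The only step that needs justification is the interchange of sum and integral, and I expect this to be the main---indeed the only---technical point, although it is entirely routine. Since $0 \le \beta < 1$, the kernel $(h-\sigma)^{-\beta}$ is integrable on $[0,h]$, and the partial sums of the exponential series are dominated by $\sum_{k=0}^\infty |\lambda|^k\sigma^k/k! = e^{|\lambda|\sigma} \le e^{|\lambda|h}$ uniformly on $[0,h]$; hence Tonelli's theorem (or dominated convergence applied to the partial sums) legitimizes the exchange.

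Alternatively, one can bypass the series manipulation by a Laplace-transform argument: as a function of $h$ the left-hand side is a convolution whose Laplace transform equals $p^{\beta-1}\cdot(p+\lambda)^{-1}$, while the standard formula $\mathcal{L}[\,t^{\gamma-1}E_{\alpha,\gamma}(-\lambda t^\alpha)\,](p) = p^{\alpha-\gamma}/(p^\alpha+\lambda)$ with $\alpha = 1$, $\gamma = 2-\beta$ yields the same transform for the right-hand side, so uniqueness of the Laplace transform gives the identity. I would nonetheless present the direct computation as the main proof, since it is self-contained and keeps the Mittag-Leffler series explicitly in view, which is what is actually used in the sequel.
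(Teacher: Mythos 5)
Your proof is correct and follows essentially the same route as the paper: expand the exponential in its power series, integrate term by term via the Euler Beta integral, and reassemble the result into the Mittag-Leffler series. The explicit justification of the sum–integral interchange and the alternative Laplace-transform argument are welcome additions but do not change the substance.
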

\begin{proof}
From the series expansion of the exponential function, by a direct calculation, we find
\begin{align*}
\int_s^t \frac{(t-\tau)^{-\beta}}{\Gamma(1-\beta)} e^{-\lambda(\tau-s)} d\tau 
&= \int_s^t \frac{(t-\tau)^{-\beta}}{\Gamma(1-\beta)} \sum_{n=0}^\infty \frac{(-\lambda(\tau-s))^n}{n!} 
d\tau\\
&= \sum_{n=0}^\infty \frac{(-\lambda)^n}{n!\Gamma(1-\beta)} 
\int_s^t (t-\tau)^{-\beta} (\tau-s)^n d\tau\\
&= \sum_{n=0}^\infty \frac{(-\lambda)^n}{n!\Gamma(1-\beta)} 
\int_0^{t-s} (t-s-\tau)^{-\beta} \tau^n d\tau\\
&= \sum_{n=0}^\infty \frac{(-\lambda)^n}{n!\Gamma(1-\beta)} (t-s)^{n+1-\beta} B(1-\beta, n+1),
\end{align*}
where $B(a,b)$ denotes the beta function. 
Moreover, noting the identity between the beta function and the gamma function: 
$B(a,b) = \frac{\Gamma(a) \Gamma(b)}{\Gamma(a+b)}$ and $\Gamma(n+1) = n!$, 
from the definition of the Mittag-Leffler function we obtain
\begin{align*}
\int_s^t \frac{(t-\tau)^{-\beta}}{\Gamma(1-\beta)} e^{-\lambda(\tau-s)} d\tau 
= (t-s)^{1-\beta} E_{1,2-\beta} (-\lambda(t-s)),
\end{align*}
which completes the proof of the lemma.
\end{proof}

On the basis of the above lemma, we further have
\begin{lem}
Let $0\le \beta<1$. Then there exists a constant $C=C(\beta)>0$ such that the following inequality 
\begin{equation}
\label{esti-e}
\left\|\partial_t^\beta \int_0^t e^{-(t-s)A} w(s)ds\right\|_{H^{-1}(\Omega)} 
\le C\int_0^t (t-s)^{-\beta}\|w(s)\|_{H^{-1}(\Omega)} ds
\end{equation}
holds true for any $w\in L^2(0,T; H^{-1}(\Omega))$.
\end{lem}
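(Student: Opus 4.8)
The plan is to diagonalize everything in the eigenbasis $\{\varphi_k\}_{k=1}^\infty$ of $A$ and reduce the estimate to a scalar inequality that is uniform in the eigenvalue $\lambda_k$. Write $g_k(s):=\langle w(s),\varphi_k\rangle$ and $h_k(t):=\int_0^t e^{-\lambda_k(t-s)}g_k(s)\,ds$, so that $\int_0^t e^{-(t-s)A}w(s)\,ds=\sum_k h_k(t)\varphi_k$ by \eqref{def-e^tL}. Since $\|v\|_{H^{-1}(\Omega)}^2$ is comparable to $\sum_k(1+\lambda_k)^{-1}|\langle v,\varphi_k\rangle|^2$, it suffices to bound each $|\partial_t^\beta h_k(t)|$ by $C\int_0^t(t-\tau)^{-\beta}|g_k(\tau)|\,d\tau$ with $C$ independent of $k$. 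Because $h_k$ is absolutely continuous and $h_k(0)=0$, the Caputo derivative coincides with the Riemann--Liouville one, so $\partial_t^\beta h_k=\tfrac{d}{dt}J^{1-\beta}h_k$, where $J^{1-\beta}$ is the fractional integral of order $1-\beta$. Writing $h_k=E_k*g_k$ with $E_k(\sigma):=e^{-\lambda_k\sigma}$ and using $J^{1-\beta}(E_k*g_k)=(J^{1-\beta}E_k)*g_k$, Lemma~\ref{lem-exp} with $s=0$ and $\lambda=\lambda_k$ gives $J^{1-\beta}E_k(\sigma)=\sigma^{1-\beta}E_{1,2-\beta}(-\lambda_k\sigma)=:\psi_k(\sigma)$.

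The next step is to differentiate. As $\psi_k(0)=0$ and $\psi_k'$ is integrable near $0$ (it behaves like $\sigma^{-\beta}/\Gamma(1-\beta)$), one gets $\partial_t^\beta h_k(t)=\tfrac{d}{dt}\int_0^t\psi_k(t-\tau)g_k(\tau)\,d\tau=\int_0^t\psi_k'(t-\tau)g_k(\tau)\,d\tau$. Differentiating the Mittag--Leffler series term by term (or using the standard identity $\tfrac{d}{d\sigma}[\sigma^{\gamma-1}E_{1,\gamma}(-\lambda\sigma)]=\sigma^{\gamma-2}E_{1,\gamma-1}(-\lambda\sigma)$ with $\gamma=2-\beta$) yields the clean identity $\psi_k'(\sigma)=\sigma^{-\beta}E_{1,1-\beta}(-\lambda_k\sigma)$. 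Now apply Lemma~\ref{lem-ML} with $\alpha=1$ and $\gamma=1-\beta>0$ (choose any admissible $\mu\in(\pi/2,\pi)$; the argument $-\lambda_k\sigma$ is real and nonpositive, hence $|\arg(-\lambda_k\sigma)|=\pi$ whenever $\lambda_k\sigma>0$): this gives $|E_{1,1-\beta}(-\lambda_k\sigma)|\le C/(1+\lambda_k\sigma)\le C$ with $C$ independent of $k$. Therefore $|\psi_k'(\sigma)|\le C\sigma^{-\beta}$ and
\begin{equation*}
|\partial_t^\beta h_k(t)|\le C\int_0^t(t-\tau)^{-\beta}|g_k(\tau)|\,d\tau .
\end{equation*}

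To conclude, square this, multiply by $(1+\lambda_k)^{-1}$, sum over $k$, and take square roots; Minkowski's integral inequality moves the $\ell^2_k$-norm inside the $d\tau$-integral, and one recognizes $\big(\sum_k(1+\lambda_k)^{-1}|g_k(\tau)|^2\big)^{1/2}$ as being comparable to $\|w(\tau)\|_{H^{-1}(\Omega)}$. This gives exactly \eqref{esti-e}. The degenerate case $\beta=0$ is immediate, since then $\partial_t^0$ is the identity and $\|e^{-\sigma A}\|\le 1$ on $H^{-1}(\Omega)$.

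The genuinely substantive point is the uniform-in-$k$ scalar bound $|\psi_k'(\sigma)|\le C\sigma^{-\beta}$, which rests on the exact identification $\psi_k'(\sigma)=\sigma^{-\beta}E_{1,1-\beta}(-\lambda_k\sigma)$ together with the sharp decay of the Mittag--Leffler function (Lemma~\ref{lem-ML}); without the $1/(1+|z|)$ factor the constant would blow up with $\lambda_k$ and the argument would fail. The remaining items are routine and I would not dwell on them: justifying the commutation $J^{1-\beta}(E_k*g_k)=(J^{1-\beta}E_k)*g_k$ and the differentiation under the integral sign, the term-by-term differentiation of the series, and the interchange of $\partial_t^\beta$ with the summation over $k$ — the latter handled most cleanly by first arguing for finite partial sums (or smooth $w$) and then passing to the limit by means of the very inequality being proved.
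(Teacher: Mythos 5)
Your proof is correct and follows essentially the same route as the paper: both reduce the estimate to the scalar kernel computation of Lemma~\ref{lem-exp} and the Mittag--Leffler decay bound \eqref{esti-ML}, the only cosmetic difference being that you package the paper's two pieces $I_1+I_2$ into the single kernel $\sigma^{-\beta}E_{1,1-\beta}(-\lambda_k\sigma)$ (via $E_{1,1-\beta}(z)=1/\Gamma(1-\beta)+zE_{1,2-\beta}(z)$) and close the argument with the weighted $\ell^2$ form of the $H^{-1}$-norm plus Minkowski instead of duality against $\psi\in H_0^1(\Omega)$ plus Cauchy--Schwarz. No changes needed.
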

\begin{proof}
By \eqref{def-e^tL}, we divide $\partial_t^\beta \int_0^t e^{-(t-s)A} w(s)ds$ into two parts: 
\begin{align*}
& I_1 := \frac{1}{\Gamma(1-\beta)}\int_0^t (t-s)^{-\beta} w(s) ds,\\
& I_2 := \frac{1}{\Gamma(1-\beta)}\int_0^t (t-\tau)^{-\beta} \int_0^\tau \partial_\tau 
\left(\sum_{k=1}^\infty e^{-\lambda_k (\tau-s)}\langle w(s),\varphi_k \rangle\varphi_k \right)ds d\tau.
\end{align*}
For any $\psi\in H_0^1(\Omega)$, 
\begin{align*}
\left|\langle I_1,\psi \rangle\right| 
&=\left|\frac{1}{\Gamma(1-\beta)}\int_0^t (t-s)^{-\beta} \langle w(s),\psi \rangle ds\right|\\
&\le \frac{1}{\Gamma(1-\beta)}\int_0^t (t-s)^{-\beta} |\langle w(s),\psi \rangle| ds.
\end{align*}
Thus, we have 
\begin{align*}
\|I_1\|_{H^{-1}(\Omega)} &= \sup_{\|\psi\|_{H_0^1(\Omega)}=1} \left|\langle I_1,\psi \rangle\right|\\
&\le \frac{1}{\Gamma(1-\beta)}\int_0^t (t-s)^{-\beta} \|w(s)\|_{H^{-1}(\Omega)} ds.
\end{align*}
On the other hand, by Fubini's theorem, noting the identity \eqref{eq-e}, we calculate
\begin{align*}
I_2 
&= \sum_{k=1}^\infty -\lambda_k \int_0^t \langle w(s),\varphi_k \rangle\varphi_k 
\int_s^t \frac{(t-\tau)^{-\beta}}{\Gamma(1-\beta)} e^{-\lambda_k(\tau-s)} d\tau ds\\
&= \sum_{k=1}^\infty -\lambda_k \int_0^t (t-s)^{1-\beta} E_{1,2-\beta} (-\lambda_k(t-s)) 
\langle w(s),\varphi_k \rangle \varphi_k ds.
\end{align*}
Consequently, for any $\psi\in H_0^1(\Omega)$, we use the estimate \eqref{esti-ML} for 
the Mittag-Leffler functions to derive
\begin{align*}
\left|\langle I_2,\psi \rangle\right| 
&\le \sum_{k=1}^\infty \int_0^t \lambda_k (t-s)^{1-\beta} \left|E_{1,2-\beta} (-\lambda_k (t-s))\right| 
\left|\langle w(s),\varphi_k \rangle\right| \left|\langle \varphi_k,\psi \rangle\right| ds\\
&\le C\sum_{k=1}^\infty \int_0^t (t-s)^{-\beta} \frac{\lambda_k (t-s)}{1+\lambda_k (t-s)} 
\left|\lambda_k^{-\frac12}\langle w(s),\varphi_k \rangle\right| 
\left|\lambda_k^{\frac12}\langle \varphi_k,\psi \rangle\right| ds,
\end{align*}
which combined with H\"older's inequality implies
\begin{align*}
\left|\langle I_2,\psi \rangle\right| 
&\le C\int_0^t (t-s)^{-\beta} 
\left(\sum_{k=1}^\infty \lambda_k^{-1}\left|\langle w(s),\varphi_k \rangle\right|^2 \right)^{\frac12}
\left(\sum_{k=1}^\infty \lambda_k \left|\langle \varphi_k,\psi \rangle\right|^2 \right)^{\frac12} ds\\
&= C\|\psi\|_{H_0^1(\Omega)} \int_0^t (t-s)^{-\beta} \|w(s)\|_{H^{-1}(\Omega)} ds.
\end{align*}
Finally, we have 
\begin{align*}
\|I_2\|_{H^{-1}(\Omega)} \le C\int_0^t (t-s)^{-\beta} \|w(s)\|_{H^{-1}(\Omega)} ds.
\end{align*}
Thus, we complete the proof by the triangle inequality for the norm.
\end{proof}

Now we give the proof of Lemma~{\bf 3.3}. 
\begin{proof}[Proof of Lemma~{\bf 3.3}]
According to the equation $v=Kv$, we find
\begin{align*}
\left\|\partial_t^\beta v(t)\right\|_{H^{-1}(\Omega)} 
&= \left\|\partial_t^\beta Kv(t)\right\|_{H^{-1}(\Omega)} \\
&= \left\|\partial_t^\beta \int_0^t e^{-(t-s)A} (c(s)v(s)-q(s)\partial_s^\alpha v(s))ds 
\right\|_{H^{-1}(\Omega)} 
\end{align*}
for $0\le \beta <1$. 
By taking $\beta = \alpha_1$, $\beta = 0$ in the estimate \eqref{esti-e} separately, and noting that 
$c\in L^\infty(0,T;W^{2,\infty}(\Omega))$, $q\in L^\infty(0,T)$, we obtain
\begin{align*}
\|\partial_t^{\alpha_1} v(t)\|_{H^{-1}(\Omega)} 
&\le C\int_0^t (t-s)^{-\alpha_1}\|v(s)\|_{H^{-1}(\Omega)} ds \\
&\quad + C\int_0^t (t-s)^{-\alpha_1}\|\partial_s^\alpha v(s)\|_{H^{-1}(\Omega)} ds 
\end{align*}
and
\begin{align*}
\|v(t)\|_{H^{-1}(\Omega)} 
\le C\int_0^t \|v(s)\|_{H^{-1}(\Omega)} ds + C\int_0^t \|\partial_s^\alpha v(s)\|_{H^{-1}(\Omega)} ds. 
\end{align*}
Moreover, by noting the semigroup property 
$J^{\gamma_1+\gamma_2}=J^{\gamma_1}J^{\gamma_2}$, $\gamma_1,\gamma_2>0$ 
of the Riemann-Liouville fractional integral operator which is defined by
$$
J^\gamma g(t) := \frac{1}{\Gamma(\gamma)}\int_0^t (t-\tau)^{\gamma-1} g(\tau) d\tau,
\quad \gamma > 0,
$$
we see that
$$
\|\partial_s^\alpha v(s)\|_{H^{-1}(\Omega)} 
\le C J^{\alpha_1-\alpha}\|\partial_t^{\alpha_1} v(s)\|_{H^{-1}(\Omega)},
$$
from which we further obtain that
\begin{align*}
\int_0^t \|\partial_s^\alpha v(s)\|_{H^{-1}(\Omega)} ds
&\le CJ^{1+\alpha_1-\alpha} \|\partial_t^{\alpha_1} v(t)\|_{H^{-1}(\Omega)}\\
&\le C\int_0^t (t-s)^{\alpha_1-\alpha}  \|\partial_t^{\alpha_1} v(s)\|_{H^{-1}(\Omega)} ds,
\end{align*}
and 
\begin{align*}
\int_0^t (t-s)^{-\alpha_1}\|\partial_s^\alpha v(s)\|_{H^{-1}(\Omega)} ds
&\le CJ^{1-\alpha} \|\partial_t^{\alpha_1} v(t)\|_{H^{-1}(\Omega)}\\
&\le C\int_0^t (t-s)^{-\alpha}  \|\partial_t^{\alpha_1} v(s)\|_{H^{-1}(\Omega)} ds.
\end{align*}
Finally, since $1, (t-s)^{-\alpha}, (t-s)^{\alpha_1-\alpha}\le C(t-s)^{-\alpha_1}$ for $\alpha_1>\alpha$, 
we obtain
\begin{align*}
&\|v(t)\|_{H^{-1}(\Omega)} + \|\partial_t^{\alpha_1} v(t)\|_{H^{-1}(\Omega)} \\
\le &\; C\int_0^t (t-s)^{-\alpha_1}
\left(\|v(s)\|_{H^{-1}(\Omega)} + \|\partial_s^{\alpha_1} v(s)\|_{H^{-1}(\Omega)}\right) ds
\end{align*}
with a generic constant $C>0$ which depends also on $T$. 
Therefore, generalized Gr\"onwall's inequality (see e.g., \cite[lemma~{\bf 7.1.1}]{H81}) implies $v=0$. 
We finish the proof of the lemma. 
\end{proof}

By the Fredholm alternative, we proved that the initial-boundary value problem 
\eqref{equ-u}--\eqref{equ-ibc} admits a unique solution in $H^{\alpha_1}(0,T; H^{-1}(\Omega))$ 
with $1>\alpha_1>\max\{\frac12,\alpha\}$. 

\subsection{Improved regularity}
Next we show the improved regularity and some estimates by using the integral form \eqref{sol-u}. 

Recalling that we rewrite \eqref{sol-u} by \eqref{equ-Ku}, 
then it is sufficient to discuss the regularity for $F$ and $Ku$, respectively. 
Since $F$ is the solution to parabolic equation \eqref{equ-F}, 
under the assumptions that $u_0\in L^2(\Omega)$, $f\in L^2(0,T;H^{-1}(\Omega))$, 
we obtain by classical regularity for parabolic equations 
that $F\in H^1(0,T; H^{-1}(\Omega))\cap L^2(0,T;H_0^1(\Omega)) \cap C([0,T]; L^2(\Omega))$ 
(e.g., \cite[Example~{\bf 4.7.1}, Chapter~{\bf 3}]{LM1}). 
Similarly, we have the same regularity for $Ku$. Therefore, we find the improved regularity 
$$
u\in H^1(0,T; H^{-1}(\Omega))\cap L^2(0,T;H_0^1(\Omega)) \cap C([0,T];L^2(\Omega)).
$$

In order to establish the estimate for the solution $u$, we need the following lemmata.
\begin{lem}
\label{lem-Ku}
There exists a constant $C=C(\alpha, c,q,T)>0$ such that
\begin{align*}
\|\partial_t Ku(t)\|_{H^{-1}(\Omega)} 
&\le C\|u_0\|_{L^2(\Omega)} \\
&\quad + C\int_0^t (t-\tau)^{-\alpha} \left(\|u(\tau)\|_{H_0^1(\Omega)} 
+ \|\partial_\tau u(\tau)\|_{H^{-1}(\Omega)}\right) d\tau.
\end{align*}
\end{lem}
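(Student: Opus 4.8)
The plan is to split $Ku$ according to its two source terms, writing $Ku=K_1-K_2$ with
$$
K_1(t):=\int_0^t e^{-(t-s)A}c(s)u(s)\,ds,\qquad
K_2(t):=\int_0^t e^{-(t-s)A}q(s)\partial_s^\alpha u(s)\,ds,
$$
and to bound $\partial_t K_1$ and $\partial_t K_2$ separately. Throughout I use that $u\in H^1(0,T;H^{-1}(\Omega))\cap L^2(0,T;H_0^1(\Omega))$ with $u(0)=u_0$ by the first part of Theorem~\ref{thm-uniq-exist}, that $e^{-tA}$ is a contraction on $L^2(\Omega)$ and on $H^{-1}(\Omega)$ with $\|e^{-tA}g\|_{H^{-1}(\Omega)}\le C\|g\|_{L^2(\Omega)}$, the Gelfand triple embeddings $H_0^1(\Omega)\hookrightarrow L^2(\Omega)\hookrightarrow H^{-1}(\Omega)$, and the elementary bound $1\le T^\alpha(t-s)^{-\alpha}$ for $0\le s<t\le T$, by which a constant weight inside a time integral is always absorbed into $C(t-s)^{-\alpha}$.

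For $\partial_t K_1$ I would differentiate the Duhamel integral directly. Writing $K_1(t)=\int_0^t e^{-\sigma A}\big(c(t-\sigma)u(t-\sigma)\big)\,d\sigma$ and using the assumed smoothness of $c$ in time, one gets
$$
\partial_t K_1(t)=e^{-tA}\big(c(0)u_0\big)+\int_0^t e^{-(t-s)A}\,\partial_s\big(c(s)u(s)\big)\,ds,
$$
where $\partial_s(c(s)u(s))=(\partial_s c)(s)u(s)+c(s)\partial_s u(s)\in L^2(0,T;H^{-1}(\Omega))$. The boundary term obeys $\|e^{-tA}(c(0)u_0)\|_{H^{-1}(\Omega)}\le C\|u_0\|_{L^2(\Omega)}$, which is the first term of the claim; the integral term is bounded, via the contraction on $H^{-1}(\Omega)$ together with the above embeddings, by $C\int_0^t\big(\|u(s)\|_{H_0^1(\Omega)}+\|\partial_s u(s)\|_{H^{-1}(\Omega)}\big)\,ds$, hence by $C\int_0^t(t-s)^{-\alpha}\big(\|u(s)\|_{H_0^1(\Omega)}+\|\partial_s u(s)\|_{H^{-1}(\Omega)}\big)\,ds$.

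For $\partial_t K_2$ I would instead use that $K_2$ solves the parabolic problem $\partial_t K_2+AK_2=q\,\partial_t^\alpha u$, $K_2(\cdot,0)=0$, so $\partial_t K_2(t)=q(t)\partial_t^\alpha u(t)-AK_2(t)$ for almost all $t$; here it is essential \emph{not} to differentiate $\partial_t^\alpha u$, since $\partial_t^{1+\alpha}u$ is not under control. As $\partial_t^\alpha u=J^{1-\alpha}\partial_t u$, the first term already satisfies $\|q(t)\partial_t^\alpha u(t)\|_{H^{-1}(\Omega)}\le C\int_0^t(t-r)^{-\alpha}\|\partial_r u(r)\|_{H^{-1}(\Omega)}\,dr$. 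The remaining term $AK_2(t)$ is the heart of the proof. Inserting $\partial_s^\alpha u(s)=\frac{1}{\Gamma(1-\alpha)}\int_0^s(s-r)^{-\alpha}\partial_r u(r)\,dr$ and exchanging the order of integration, one is led to
$$
AK_2(t)=\int_0^t G(t,r)\,\partial_r u(r)\,dr,
$$
where $G(t,r)$ is the operator which is diagonal in $\{\varphi_k\}$ with multipliers $m_k(t,r):=\frac{\lambda_k}{\Gamma(1-\alpha)}\int_r^t e^{-\lambda_k(t-s)}q(s)(s-r)^{-\alpha}\,ds$, and it suffices to prove $\sup_k|m_k(t,r)|\le C(t-r)^{-\alpha}$, i.e.\ $\|G(t,r)\|_{H^{-1}(\Omega)\to H^{-1}(\Omega)}\le C(t-r)^{-\alpha}$. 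Indeed, bounding $|q|\le\|q\|_{L^\infty(0,T)}$ and changing variables, Lemma~\ref{lem-exp} (with $\beta=\alpha$) evaluates $\int_r^t e^{-\lambda_k(t-s)}(s-r)^{-\alpha}\,ds=\Gamma(1-\alpha)(t-r)^{1-\alpha}E_{1,2-\alpha}(-\lambda_k(t-r))$, and Lemma~\ref{lem-ML} gives $|E_{1,2-\alpha}(-\lambda_k(t-r))|\le C/(1+\lambda_k(t-r))$; multiplying by $\lambda_k$ leaves $C\lambda_k(t-r)^{1-\alpha}/(1+\lambda_k(t-r))\le C(t-r)^{-\alpha}$ uniformly in $k$. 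Consequently $\|AK_2(t)\|_{H^{-1}(\Omega)}\le\int_0^t\|G(t,r)\|_{H^{-1}(\Omega)\to H^{-1}(\Omega)}\|\partial_r u(r)\|_{H^{-1}(\Omega)}\,dr\le C\int_0^t(t-r)^{-\alpha}\|\partial_r u(r)\|_{H^{-1}(\Omega)}\,dr$, and assembling the four contributions by the triangle inequality proves the lemma.

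The step I expect to be the main obstacle is the estimate for $AK_2(t)$: the crude smoothing bound $\|Ae^{-\sigma A}\|_{H^{-1}(\Omega)\to H^{-1}(\Omega)}\le C\sigma^{-1}$ is too singular to be integrated against the kernel $(s-r)^{-\alpha}$, and it is precisely the genuine decay of the Mittag-Leffler function in Lemma~\ref{lem-ML}, combined with the exact evaluation of the convolution of the exponential with $(\cdot)^{-\alpha}$ in Lemma~\ref{lem-exp}, that upgrades it to the integrable and sharp rate $(t-r)^{-\alpha}$. Two routine technical points — the termwise differentiation of the Duhamel integral for $K_1$ and the interchange of integration for $K_2$ (most cleanly carried out at the level of the Fourier coefficients $\langle u(\cdot),\varphi_k\rangle$, with Minkowski's integral inequality applied at the end) — are to be handled in the standard way, first for smooth data and then by passing to the limit using \eqref{esti-fst}.
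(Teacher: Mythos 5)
Your treatment of the two essential pieces --- the boundary term $q(t)\partial_t^\alpha u(t)=q(t)J^{1-\alpha}\partial_t u(t)$ and the term $AK_2(t)$, which you reduce via Fubini to the diagonal multipliers $m_k(t,r)$ and then control by the exact evaluation in Lemma~\ref{lem-exp} together with the Mittag--Leffler decay of Lemma~\ref{lem-ML}, so that $\lambda_k(t-r)^{1-\alpha}|E_{1,2-\alpha}(-\lambda_k(t-r))|\le C(t-r)^{-\alpha}$ uniformly in $k$ --- is exactly the paper's argument; the paper organizes the same computation as $\partial_t Ku=I_0+I_1+I_2$ with $I_0$ the boundary term coming from \eqref{equ-para} and $I_1, I_2$ the two components of $-AKu$. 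The one place you genuinely deviate is the $cu$-part. You differentiate the Duhamel integral after the substitution $\sigma=t-s$, producing $e^{-tA}(c(0)u_0)+\int_0^t e^{-(t-s)A}\partial_s\bigl(c(s)u(s)\bigr)\,ds$; this requires $\partial_t c$ to exist with, say, $\partial_t c\in L^\infty(0,T;L^\infty(\Omega))$, which is \emph{not} contained in the paper's stated hypothesis $c\in L^\infty(0,T;W^{2,\infty}(\Omega))$ (only $L^\infty$ in time). The paper avoids this by keeping the derivative on the semigroup: it writes the boundary term as $c(t)u(t)$ with $u(t)=u_0+\int_0^t\partial_\tau u\,d\tau$ (which is where its $C\|u_0\|_{L^2(\Omega)}$ comes from), and bounds the remaining piece $AK_1(t)$ directly by $C\int_0^t\|c(s)u(s)\|_{H_0^1(\Omega)}\,ds\le C\int_0^t\|u(s)\|_{H_0^1(\Omega)}\,ds$ using only $\lambda_k e^{-2\lambda_k(t-s)}\le\lambda_k$ and the spatial regularity of $c$. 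Your version buys a slightly cleaner appearance of the $\|u_0\|_{L^2(\Omega)}$ term at the price of an extra time-regularity assumption on $c$; to match the paper's hypotheses you should treat $K_1$ the same way you treat $K_2$, i.e.\ via $\partial_t K_1=c(t)u(t)-AK_1(t)$. Everything else, including the final absorption of constants by $1\le T^\alpha(t-s)^{-\alpha}$, is as in the paper.
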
 
\begin{proof}
We divide $\partial_t Ku(t)$ into three parts: 
\begin{align*}
&I_0 := c(t)u(t) - q(t)\partial_t^\alpha u(t),\\
&I_1 := \int_0^t \sum_{k=1}^\infty \partial_t \left(e^{-\lambda_k(t-s)} 
\langle c(s)u(s),\varphi_k \rangle \varphi_k \right) ds,\\
&I_2 := \int_0^t \sum_{k=1}^\infty \partial_t \left(e^{-\lambda_k(t-s)} 
\langle q(s)\partial_s^\alpha u(s),\varphi_k \rangle \varphi_k \right) ds.
\end{align*}
Since $c\in L^\infty(0,T; W^{2,\infty}(\Omega))$, $q\in L^\infty(0,T)$, it follows that
\begin{align*}
\|I_0\|_{H^{-1}(\Omega)} 
&\le C\left\|u_0 + \int_0^t \partial_\tau u(\tau) d\tau\right\|_{H^{-1}(\Omega)} 
+ C\left\|\int_0^t (t-\tau)^{-\alpha} \partial_\tau u(\tau) d\tau\right\|_{H^{-1}(\Omega)}\\
&\le C\left(\|u_0\|_{L^2(\Omega)} + \int_0^t (t-\tau)^{-\alpha} \|\partial_\tau u(\tau)\|_{H^{-1}(\Omega)} 
d\tau\right).
\end{align*}
Here we used the triangle inequality and $u(t) = u_0 + \int_0^t \partial_\tau u(\tau) d\tau$.
Next we derive the estimations for $I_1$ and $I_2$. In fact, for any $\psi\in H_0^1(\Omega)$, 
we conclude from H\"older's inequality that
\begin{align*}
|\langle I_1,\psi \rangle | 
&\le \sum_{k=1}^\infty \int_0^t \lambda_k e^{-\lambda_k(t-s)}
\left|\langle c(s)u(s),\varphi_k \rangle \right| \left|\langle \varphi_k,\psi \rangle \right| ds\\
&\le C\int_0^t \left(\sum_{k=1}^\infty  \lambda_k \left|\langle \varphi_k,\psi \rangle \right|^2 
ds\right)^{\frac12} \left(\sum_{k=1}^\infty\lambda_k e^{-2\lambda_k(t-s)} 
\left|\langle c(s)u(s),\varphi_k \rangle \right|^2 \right)^{\frac12}ds\\
&\le C\|\psi\|_{H_0^1(\Omega)} \int_0^t\left(\sum_{k=1}^\infty \lambda_k e^{-2\lambda_k(t-s)}
\left|\langle c(s)u(s),\varphi_k \rangle \right|^2 \right)^{\frac12}ds.
\end{align*}
Moreover, noting that $e^{-2\lambda_k s} \le 1$ for $s>0$, we see that
\begin{align*}
|\langle I_1,\psi \rangle| &\le  C\|\psi\|_{H_0^1(\Omega)} \int_0^t \left(\sum_{k=1}^\infty 
\lambda_k \left|\langle c(s)u(s),\varphi_k \rangle \right|^2 \right)^{\frac12} ds\\
&\le  C\|\psi\|_{H_0^1(\Omega)} \int_0^t \|c(s)u(s)\|_{H_0^1(\Omega)} ds,
\end{align*}
which combined with the assumption that $c\in L^\infty(0,T; W^{2,\infty}(\Omega))$ implies
\begin{align*}
|\langle I_1,\psi \rangle | 
&\le C\|\psi\|_{H_0^1(\Omega)} \int_0^t \|u(s)\|_{H_0^1(\Omega)} ds,
\end{align*}
that is,
\begin{align*}
\|I_1\|_{H^{-1}(\Omega)} 
\le C \int_0^t \|u(s)\|_{H_0^1(\Omega)}  ds .
\end{align*}
On the other hand, for any $\psi\in H_0^1(\Omega)$, we have
\begin{align*}
| \langle I_2,\psi \rangle |
&= \left| -\sum_{k=1}^\infty \int_0^t \lambda_k e^{-\lambda_k(t-s)} q(s)
\langle \partial_s^\alpha u(s),\varphi_k \rangle \langle \varphi_k,\psi \rangle ds \right|\\
&\le \sum_{k=1}^\infty \int_0^t \lambda_k e^{-\lambda_k(t-s)} \left|\langle \varphi_k,\psi \rangle \right|
|q(s)| \int_0^s \frac{(s-\tau)^{-\alpha}}{\Gamma(1-\alpha)} 
\left|\langle \partial_\tau u(\tau),\varphi_k \rangle \right| d\tau ds\\
&\le C\sum_{k=1}^\infty \lambda_k \left|\langle \varphi_k,\psi \rangle \right| 
\int_0^t \left|\langle \partial_\tau u(\tau),\varphi_k \rangle \right|
\int_\tau^t \frac{(s-\tau)^{-\alpha}}{\Gamma(1-\alpha)} e^{-\lambda_k(t-s)} ds d\tau. 
\end{align*}
Here the last equality is due to Fubini's theorem. 
Similarly to the proof of Lemma~{\bf 3.5}, we obtain
\begin{align*}
\|I_2\|_{H^{-1}(\Omega)} 
&\le C\int_0^t (t-\tau)^{-\alpha} \|\partial_\tau u(\tau)\|_{H^{-1}(\Omega)} d\tau.  
\end{align*}
Collecting all the above estimates, and noting that 
$$
1 \le T^{\alpha} t^{-\alpha} \le Ct^{-\alpha},\quad 0<t\le T,
$$
we finish the proof of the lemma.
\end{proof}

In a similar way, we can prove
\begin{lem}
\label{lem-Ku1}
There exists a constant $C=C(\alpha,c,q,T)>0$ such that
\begin{align*}
\|Ku(t)\|_{H_0^1(\Omega)} 
&\le C\int_0^t (t-\tau)^{-\alpha} \left(\|u(\tau)\|_{H_0^1(\Omega)} 
+ \|\partial_\tau u(\tau)\|_{H^{-1}(\Omega)}\right) d\tau.
\end{align*}
\end{lem}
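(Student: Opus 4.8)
The plan is to follow the proof of Lemma~\ref{lem-Ku} almost verbatim, with two differences: since we do not differentiate the Duhamel integral in $t$ there is no boundary term analogous to $I_0$, but on the other hand we must now control the stronger norm $\|Ku(t)\|_{H_0^1(\Omega)}$. First I would expand $Ku(t)$ in the eigenbasis via \eqref{def-K} and split
\[
Ku(t)=\widetilde I_1(t)-\widetilde I_2(t),\qquad
\widetilde I_j(t)=\sum_{k=1}^\infty\left(\int_0^t e^{-\lambda_k(t-s)}g_j(s,k)\,ds\right)\varphi_k,
\]
where $g_1(s,k)=\langle c(s)u(s),\varphi_k\rangle$ and $g_2(s,k)=q(s)\langle\partial_s^\alpha u(s),\varphi_k\rangle$. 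Throughout I would use that $\|v\|_{H_0^1(\Omega)}$ is equivalent to $(\sum_{k=1}^\infty\lambda_k|\langle v,\varphi_k\rangle|^2)^{1/2}$ and $\|w\|_{H^{-1}(\Omega)}$ to $(\sum_{k=1}^\infty\lambda_k^{-1}|\langle w,\varphi_k\rangle|^2)^{1/2}$ (equivalently, pair against $\psi\in H^{-1}(\Omega)$ and take the supremum over $\|\psi\|_{H^{-1}(\Omega)}=1$, exactly as in Lemma~\ref{lem-Ku}), and estimate the two pieces separately.

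The term $\widetilde I_1$ is handled as $I_1$ in Lemma~\ref{lem-Ku}: Minkowski's integral inequality together with the trivial bound $\lambda_k e^{-2\lambda_k(t-s)}\le\lambda_k$ gives $\|\widetilde I_1(t)\|_{H_0^1(\Omega)}\le C\int_0^t\|c(s)u(s)\|_{H_0^1(\Omega)}\,ds$; since $c\in L^\infty(0,T;W^{2,\infty}(\Omega))$, multiplication by $c(s)$ is bounded on $H_0^1(\Omega)$ uniformly in $s$, so this is $\le C\int_0^t\|u(s)\|_{H_0^1(\Omega)}\,ds$, and the weight $(t-s)^{-\alpha}$ is then produced for free from $1\le T^\alpha(t-s)^{-\alpha}$ on $0<t-s\le T$.

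The real work is the $\widetilde I_2$ term, which I expect to be the main obstacle. Substituting the definition of the Caputo derivative, using Fubini to exchange the $s$- and $\tau$-integrations, and evaluating the inner integral
\[
\int_\tau^t(s-\tau)^{-\alpha}e^{-\lambda_k(t-s)}\,ds=\Gamma(1-\alpha)(t-\tau)^{1-\alpha}E_{1,2-\alpha}(-\lambda_k(t-\tau))
\]
exactly as in the treatment of $I_2$ in Lemma~\ref{lem-Ku} (this is Lemma~\ref{lem-exp} after reflecting the integration variable), and using $q\in L^\infty(0,T)$, the $k$-th coefficient of $\widetilde I_2(t)$ is bounded by $C\int_0^t|\langle\partial_\tau u(\tau),\varphi_k\rangle|\,(t-\tau)^{1-\alpha}\,|E_{1,2-\alpha}(-\lambda_k(t-\tau))|\,d\tau$. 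Now I would invoke the Mittag-Leffler estimate \eqref{esti-ML} of Lemma~\ref{lem-ML} to replace $|E_{1,2-\alpha}(-\lambda_k(t-\tau))|$ by $C(1+\lambda_k(t-\tau))^{-1}$, and the decisive observation is the elementary inequality
\[
\lambda_k\cdot\frac{(t-\tau)^{2(1-\alpha)}}{\big(1+\lambda_k(t-\tau)\big)^2}
=\lambda_k^{-1}(t-\tau)^{-2\alpha}\left(\frac{\lambda_k(t-\tau)}{1+\lambda_k(t-\tau)}\right)^2
\le\lambda_k^{-1}(t-\tau)^{-2\alpha}.
\]
Applying Minkowski's integral inequality in $\ell^2_k$ and then $\sum_{k}\lambda_k^{-1}|\langle\partial_\tau u(\tau),\varphi_k\rangle|^2\le C\|\partial_\tau u(\tau)\|_{H^{-1}(\Omega)}^2$ gives $\|\widetilde I_2(t)\|_{H_0^1(\Omega)}\le C\int_0^t(t-\tau)^{-\alpha}\|\partial_\tau u(\tau)\|_{H^{-1}(\Omega)}\,d\tau$, and adding the two estimates finishes the proof. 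The delicate point in the $\widetilde I_2$ estimate is the bookkeeping of the powers of $\lambda_k$: the sharp Mittag-Leffler decay $|E_{1,2-\alpha}(z)|\le C(1+|z|)^{-1}$ must be used to extract exactly enough negative powers of $\lambda_k$ so that the surviving time singularity is precisely $(t-\tau)^{-\alpha}$ and no worse, which is exactly the integrable singularity needed for the generalized Gr\"onwall argument that follows.
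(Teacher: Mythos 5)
Your proposal is correct and is exactly the argument the paper intends: the paper omits the proof of this lemma as being ``similar'' to those of Lemmata~\ref{lem-Ku} and the $I_2$-estimate in the preceding lemma, and your decomposition into the $c(s)u(s)$ and $q(s)\partial_s^\alpha u(s)$ pieces, the evaluation of the inner integral via Lemma~\ref{lem-exp}, and the balancing of $\lambda_k$ against $(t-\tau)$ through the bound $\lambda_k(t-\tau)/(1+\lambda_k(t-\tau))\le 1$ reproduce that argument faithfully (the extra power of $\lambda_k$ from measuring in $H_0^1(\Omega)$ rather than $H^{-1}(\Omega)$ playing the role of the power produced by $\partial_t$ in Lemma~\ref{lem-Ku}). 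Your use of Minkowski's integral inequality in $\ell^2_k$ is an equivalent substitute for the paper's duality pairing against $\psi\in H_0^1(\Omega)$ with H\"older's inequality, so no genuinely different route is taken.
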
 
Then by Lemma~{\bf 3.6}, Lemma~{\bf 3.7} and \eqref{equ-Ku}, we obtain
\begin{align*}
v(t) &\le a(t) + C\int_0^t (t-s)^{-\alpha} v(s) ds 
\end{align*}
where 
\begin{align*}
v(t) &= \|\partial_t u(t)\|_{H^{-1}(\Omega)} + \|u(t)\|_{H_0^1(\Omega)},\\
a(t) &= C\|u_0\|_{L^2(\Omega)} + \|\partial_t F(t)\|_{H^{-1}(\Omega)} + \|F(t)\|_{H_0^1(\Omega)}.
\end{align*}
Here the generic constant $C>0$ is independent of $t$, but may depend on $\alpha$ and $T$ as well. 
Finally, we employ the following generalized Gr\"onwall's inequality from \cite[Lemma~{\bf 7.1.1}]{H81}. 
\begin{lem}
\label{lem-gronwall}
Suppose $b\ge 0, \beta>0$ and $a(t)$ is a nonnegative function locally integrable on $0\le t < T$, 
and suppose $v(t)$ is nonnegative and locally integrable on $0\le t < T$ with
$$
v(t) \le a(t) + b\int_0^t (t-s)^{\beta-1} v(s) ds
$$
on this interval. Then 
$$
v(t) \le a(t) + b\Gamma(\beta)\int_0^t (t-s)^{\beta-1} E_{\beta,\beta}(b\Gamma(\beta) (t-s)^\beta) a(s) 
ds,\quad 0\le t < T.
$$
In particular, there exists a constant $C=C(b,\beta,T)>0$ such that
$$
v(t) \le a(t) + C\int_0^t (t-s)^{\beta-1} a(s) ds,\quad 0\le t < T.
$$
\end{lem}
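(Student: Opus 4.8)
The plan is to treat this as the classical Henry--Grönwall inequality and to prove it by iterating the linear Volterra operator
\[
B\phi(t):=b\int_0^t(t-s)^{\beta-1}\phi(s)\,ds=b\,\Gamma(\beta)\,J^\beta\phi(t),
\]
where $J^\beta$ is the Riemann--Liouville integral of order $\beta$ introduced in Section~{\bf 3}. The hypothesis reads $v\le a+Bv$ a.e.\ on $[0,T)$. Since the kernel of $B$ is nonnegative, $B$ is monotone ($\phi\le\psi$ a.e.\ implies $B\phi\le B\psi$), and a one-line application of Fubini's theorem shows that $B$ maps $L^1(0,t_1)$ into itself for every $t_1<T$, so all iterates below are well-defined locally integrable functions. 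Applying $B$ repeatedly to $v\le a+Bv$ and using monotonicity gives, for every $n\ge1$,
\[
v(t)\ \le\ \sum_{k=0}^{n-1}B^k a(t)\ +\ B^n v(t),\qquad t\in[0,T).
\]

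First I would compute the iterates explicitly. By the semigroup property $J^{\gamma_1}J^{\gamma_2}=J^{\gamma_1+\gamma_2}$ recalled in Section~{\bf 3} (equivalently, by induction on the Beta-function identity $\int_0^t(t-s)^{p-1}s^{q-1}\,ds=t^{p+q-1}B(p,q)$, exactly as in the proof of Lemma~{\bf 3.4}), one obtains
\[
B^n\phi(t)=\frac{(b\Gamma(\beta))^n}{\Gamma(n\beta)}\int_0^t(t-s)^{n\beta-1}\phi(s)\,ds.
\]
The crucial step is then to show $B^n v(t)\to0$ as $n\to\infty$, uniformly for $t$ in each compact subinterval $[0,t_1]\subset[0,T)$. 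For $n$ large enough that $n\beta\ge1$ we have $(t-s)^{n\beta-1}\le t_1^{n\beta-1}$ on $[0,t_1]$, hence
\[
0\le B^n v(t)\le\frac{(b\Gamma(\beta))^n\, t_1^{n\beta-1}}{\Gamma(n\beta)}\,\|v\|_{L^1(0,t_1)},
\]
and the right-hand side tends to $0$ because $\Gamma(n\beta)$ grows faster than any geometric sequence (Stirling's formula). This is the only genuinely quantitative ingredient, and I expect it, together with the bookkeeping forced by $v$ being merely \emph{locally} integrable on an interval approaching $T$, to be the main obstacle.

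It then remains to pass to the limit in the partial sums. Because $a\ge0$ and all kernels are nonnegative, Tonelli's theorem lets us interchange summation and integration:
\[
\sum_{k=0}^{\infty}B^k a(t)=a(t)+\int_0^t(t-s)^{\beta-1}\left(\sum_{n=1}^{\infty}\frac{(b\Gamma(\beta))^n(t-s)^{(n-1)\beta}}{\Gamma(n\beta)}\right)a(s)\,ds,
\]
and reindexing the inner series by $m=n-1$ identifies it, via the definition of the Mittag-Leffler function, with $b\Gamma(\beta)\,E_{\beta,\beta}\!\big(b\Gamma(\beta)(t-s)^\beta\big)$. Combining this with the displayed iteration inequality and with $B^n v\to0$ yields exactly
\[
v(t)\le a(t)+b\Gamma(\beta)\int_0^t(t-s)^{\beta-1}E_{\beta,\beta}\!\big(b\Gamma(\beta)(t-s)^\beta\big)\,a(s)\,ds.
\]

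Finally, for the second estimate stated in the lemma I would use that $E_{\beta,\beta}$ has nonnegative Taylor coefficients and is therefore nondecreasing on $[0,\infty)$; hence on $[0,T)$ the factor $E_{\beta,\beta}\!\big(b\Gamma(\beta)(t-s)^\beta\big)$ is bounded by $E_{\beta,\beta}\!\big(b\Gamma(\beta)T^\beta\big)$, and absorbing this bound into the constant gives the claim with $C=b\Gamma(\beta)E_{\beta,\beta}\!\big(b\Gamma(\beta)T^\beta\big)$, which indeed depends only on $b,\beta,T$.
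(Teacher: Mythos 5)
Your proposal is correct. Note that the paper does not prove this lemma at all: it is quoted verbatim from Henry \cite[Lemma~7.1.1]{H81}, and the iteration argument you give — writing the hypothesis as $v\le a+Bv$ with $B=b\Gamma(\beta)J^{\beta}$, computing $B^{n}$ via the semigroup property of $J^{\beta}$, killing $B^{n}v$ by the super-geometric growth of $\Gamma(n\beta)$, and summing the series into $E_{\beta,\beta}$ — is precisely the standard proof in that reference. All steps check out, including the final bound $C=b\Gamma(\beta)E_{\beta,\beta}\bigl(b\Gamma(\beta)T^{\beta}\bigr)$, which uses only that $T$ is finite.
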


Now we are ready to establish the estimates in Theorem \ref{thm-uniq-exist}.
By Lemma~{\bf 3.8}, we have
\begin{align*}
v(t)
&\le C\|u_0\|_{L^2(\Omega)} + \|\partial_t F(t)\|_{H^{-1}(\Omega)} + \|F(t)\|_{H_0^1(\Omega)}\\
&\quad + C\int_0^t (t-s)^{-\alpha} (\|u_0\|_{L^2(\Omega)} + \|\partial_s F(s)\|_{H^{-1}(\Omega)} 
+ \|F(s)\|_{H_0^1(\Omega)}) ds
\end{align*}
with a new generic constant $C>0$. 
We take $L^2$-norm over $t\in (0,T)$ on both sides and by Young's convolution inequality, we obtain
$$
\|v\|_{L^2(0,T)} \le C\left(\|u_0\|_{L^2(\Omega)} + \|\partial_t F\|_{L^2(0,T;H^{-1}(\Omega))} 
+ \|F\|_{L^2(0,T;H_0^1(\Omega))}\right). 
$$
We complete the first statement \eqref{esti-fst} of Theorem \ref{thm-uniq-exist} by noting that 
the following regularity estimate 
$$
\|\partial_t F\|_{L^2(0,T;H^{-1}(\Omega))} + \|F\|_{L^2(0,T;H_0^1(\Omega))}
\le C\left(\|u_0\|_{L^2(\Omega)} + \|f\|_{L^2(0,T; H^{-1}(\Omega))}\right)
$$
is valid since $F$ is the solution to parabolic problem \eqref{equ-F} with 
$f\in L^2(0,T; H^{-1}(\Omega))$ and $u_0\in L^2(\Omega)$. 

For the second statement \eqref{esti-sec}, recall that we assume 
$u_0\in H_0^1(\Omega), f\in L^2(0,T; L^2(\Omega))$ and $c\in L^\infty(0,T; W^{2,\infty}(\Omega))$. 
By the well-known regularity for parabolic equations (e.g., \cite[Chapter 7]{E98}), 
it is readily to see that 
$$
u = Ku + F \in H^1(0,T; L^2(\Omega)) \cap L^2(0,T; H^2(\Omega)\cap H_0^1(\Omega)) 
\cap C([0,T]; H_0^1(\Omega)). 
$$
In a similar way, the second regularity estimate \eqref{esti-sec} follows immediately from
$$
\|\partial_t F\|_{L^2(0,T;L^2(\Omega))} + \|F\|_{L^2(0,T;H^2(\Omega))}
\le C\left(\|u_0\|_{H_0^1(\Omega)} + \|f\|_{L^2(0,T; L^2(\Omega))}\right),
$$
the generalized Gr\"onwall inequality (Lemma~{\bf 3.8}) and the next lemma: 
\begin{lem}
\label{lem-Ku2}
There exists a constant $C=C(\alpha, c,q,T)>0$ such that
\begin{align*}
\|\partial_t Kv(t)\|_{L^2(\Omega)}^2 
&\le C\|v(0)\|_{L^2(\Omega)}^2 + C\int_0^t \|v(\tau)\|_{H^2(\Omega)}^2 d\tau\\
&\quad + C\int_0^t (t-\tau)^{-\alpha} \|\partial_\tau v(\tau)\|_{L^2(\Omega)}^2 d\tau,
\end{align*}
and
\begin{align*}
\|Kv(t)\|_{H^2(\Omega)}^2 
&\le C\int_0^t \|v(\tau)\|_{H^2(\Omega)}^2 d\tau 
+ C\int_0^t (t-\tau)^{-\alpha} \|\partial_\tau v(\tau)\|_{L^2(\Omega)}^2 d\tau,
\end{align*}
for all $v\in H^1(0,T;L^2(\Omega))\cap L^2(0,T; H^2(\Omega)\cap H_0^1(\Omega))$.
\end{lem}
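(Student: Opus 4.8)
Following the pattern of the proofs of Lemmas~{\bf 3.5}, {\bf 3.6} and~{\bf 3.7}, the plan is to work with the eigenfunction expansion of $Kv$, now measured in $L^2(\Omega)$. Put $g(s):=c(s)v(s)-q(s)\partial_s^\alpha v(s)$ and $g_k(s):=\langle g(s),\varphi_k\rangle$. Since $v\in H^1(0,T;L^2(\Omega))\cap L^2(0,T;H^2(\Omega)\cap H_0^1(\Omega))$, one has $\partial_t v\in L^2(0,T;L^2(\Omega))$, hence $\partial_t^\alpha v=J^{1-\alpha}\partial_t v\in L^2(0,T;L^2(\Omega))$ and $g\in L^2(0,T;L^2(\Omega))$; moreover, by the Leibniz rule together with $c\in L^\infty(0,T;W^{2,\infty}(\Omega))$, $c(s)v(s)\in H^2(\Omega)\cap H_0^1(\Omega)$ with $\|c(s)v(s)\|_{H^2(\Omega)}\le C\|v(s)\|_{H^2(\Omega)}$ for a.e.\ $s$. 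Because $Kv$ solves the parabolic problem \eqref{equ-para}, we have $\partial_t Kv(t)=g(t)-AKv(t)$ for a.e.\ $t$; since $AKv(t)=\sum_{k=1}^\infty\lambda_k\big(\int_0^t e^{-\lambda_k(t-s)}g_k(s)\,ds\big)\varphi_k$ and, by elliptic regularity and $\lambda_1>0$, $\|Kv(t)\|_{H^2(\Omega)}\le C\|AKv(t)\|_{L^2(\Omega)}$, both inequalities of the lemma reduce to the single bound
$$
\mathcal S(t):=\sum_{k=1}^\infty\lambda_k^2\Big|\int_0^t e^{-\lambda_k(t-s)}g_k(s)\,ds\Big|^2\le C\int_0^t\|v(s)\|_{H^2(\Omega)}^2\,ds+C\int_0^t(t-s)^{-\alpha}\|\partial_s v(s)\|_{L^2(\Omega)}^2\,ds
$$
together with an estimate of $\|g(t)\|_{L^2(\Omega)}^2$ (which enters only the first inequality, through $\|\partial_t Kv(t)\|_{L^2(\Omega)}\le\|g(t)\|_{L^2(\Omega)}+\|AKv(t)\|_{L^2(\Omega)}$).

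To bound $\mathcal S(t)$ I would split $g_k=\langle cv,\varphi_k\rangle-\langle q\,\partial_s^\alpha v,\varphi_k\rangle$. For the contribution of $cv$, Cauchy--Schwarz in $s$, the elementary bound $\int_0^t e^{-\lambda_k(t-s)}\,ds\le\lambda_k^{-1}$, and then $e^{-\lambda_k(t-s)}\le1$ give
$$
\sum_{k=1}^\infty\lambda_k^2\Big|\int_0^t e^{-\lambda_k(t-s)}\langle c(s)v(s),\varphi_k\rangle\,ds\Big|^2\le\int_0^t\sum_{k=1}^\infty\lambda_k|\langle c(s)v(s),\varphi_k\rangle|^2\,ds\le C\int_0^t\|v(s)\|_{H^2(\Omega)}^2\,ds,
$$
where the last step uses $\sum_k\lambda_k|\langle w,\varphi_k\rangle|^2\le C\|w\|_{H^1(\Omega)}^2$ for $w\in H_0^1(\Omega)$ and $\|c(s)v(s)\|_{H^1(\Omega)}\le C\|v(s)\|_{H^2(\Omega)}$. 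For the contribution of $q\,\partial_s^\alpha v$, I would insert $\langle\partial_s^\alpha v(s),\varphi_k\rangle=\frac1{\Gamma(1-\alpha)}\int_0^s(s-\tau)^{-\alpha}\langle\partial_\tau v(\tau),\varphi_k\rangle\,d\tau$, apply Fubini, and evaluate the inner integral by \eqref{eq-e} (after an obvious change of variables) as $\int_\tau^t\frac{(s-\tau)^{-\alpha}}{\Gamma(1-\alpha)}e^{-\lambda_k(t-s)}\,ds=(t-\tau)^{1-\alpha}E_{1,2-\alpha}(-\lambda_k(t-\tau))$. The Mittag--Leffler estimate \eqref{esti-ML} then yields $\lambda_k(t-\tau)^{1-\alpha}|E_{1,2-\alpha}(-\lambda_k(t-\tau))|\le C(t-\tau)^{-\alpha}$, so that, using $q\in L^\infty(0,T)$,
$$
\lambda_k\Big|\int_0^t e^{-\lambda_k(t-s)}q(s)\langle\partial_s^\alpha v(s),\varphi_k\rangle\,ds\Big|\le C\int_0^t(t-\tau)^{-\alpha}|\langle\partial_\tau v(\tau),\varphi_k\rangle|\,d\tau;
$$
squaring, summing in $k$, and applying Cauchy--Schwarz together with $\int_0^t(t-\tau)^{-\alpha}\,d\tau\le T^{1-\alpha}/(1-\alpha)$ produces the term $C\int_0^t(t-\tau)^{-\alpha}\|\partial_\tau v(\tau)\|_{L^2(\Omega)}^2\,d\tau$. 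Adding the two contributions gives the bound for $\mathcal S(t)$.

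Finally, to estimate $\|g(t)\|_{L^2(\Omega)}$ I would use the triangle inequality with $c,q\in L^\infty$, the identity $v(t)=v(0)+\int_0^t\partial_\tau v(\tau)\,d\tau$, the representation $\partial_t^\alpha v(t)=\frac1{\Gamma(1-\alpha)}\int_0^t(t-\tau)^{-\alpha}\partial_\tau v(\tau)\,d\tau$, Cauchy--Schwarz, and the elementary inequality $1\le T^\alpha(t-\tau)^{-\alpha}$ for $0<\tau<t\le T$, arriving at $\|g(t)\|_{L^2(\Omega)}^2\le C\|v(0)\|_{L^2(\Omega)}^2+C\int_0^t(t-\tau)^{-\alpha}\|\partial_\tau v(\tau)\|_{L^2(\Omega)}^2\,d\tau$; combining this with the bound for $\mathcal S(t)$ yields both inequalities of the lemma. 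I expect the main obstacle to be the $q\,\partial_s^\alpha v$ contribution to $\mathcal S(t)$: one has to route the fractional integration through \eqref{eq-e} to create a Mittag--Leffler kernel and then use \eqref{esti-ML} to absorb the extra factor $\lambda_k$ coming from $AKv$ --- exactly the mechanism in the proof of Lemma~{\bf 3.5}. The remaining points --- that $c(s)v(s)$ lies in $H^2(\Omega)\cap H_0^1(\Omega)$ with the stated norm bound, and that the interchanges of summation with integration and the identity $\partial_t Kv=g-AKv$ are legitimate (which follows from $g\in L^2(0,T;L^2(\Omega))$ and parabolic regularity) --- are routine, as are the repeated applications of Cauchy--Schwarz and the boundedness of $c$ and $q$.
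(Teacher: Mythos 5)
Your argument is correct and is precisely the route the paper intends: the paper omits the proof of this lemma, remarking only that it is "similar to those of Lemmata 3.6, 3.7" with the $H^2$-norm equivalence $\|w\|_{H^2(\Omega)}^2\doteqdot\sum_k\lambda_k^2|(w,\varphi_k)|^2$, and your proposal carries out exactly that adaptation (eigenfunction expansion, the $I_0$-type bound on $c(t)v(t)-q(t)\partial_t^\alpha v(t)$ via $v(t)=v(0)+\int_0^t\partial_\tau v\,d\tau$, and the Mittag--Leffler mechanism of Lemma 3.5 to absorb the extra $\lambda_k$ in the $q\,\partial_s^\alpha v$ term). No gaps.
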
 
Here we omit the proof of the above lemma since it is similar to those of Lemmata~{\bf 3.6, 3.7} 
while we note the equivalence of norms: 
$$
\|w\|_{H^2(\Omega)}^2 \doteqdot \|w\|_{H^2(\Omega)\cap H_0^1(\Omega)}^2 := 
\sum_{k=1}^\infty \lambda_k^2 |(w,\varphi_k)|^2
$$
for $w\in H^2(\Omega)\cap H_0^1(\Omega)$. 

Finally, we further assume $u_0\in H^2(\Omega)\cap H_0^1(\Omega)$, $f\in H^1(0,T; L^2(\Omega))$ 
and prove the third statement of Theorem~{\ref{thm-uniq-exist}}. 

By iterations and \eqref{equ-Ku}, we obtain
\begin{equation}
\label{equ-Ku1}
u = K^N u + \sum_{j=0}^{N-1} K^j F
\end{equation}
for some large number $N\ge \frac{1}{1-\alpha}$. 
In order to argue the regularity of solution $u$, it is sufficient to deal with $K^N u$ and $K^j F$, 
$j=0,1,\ldots,N-1$, respectively. 
In terms of Lemma~{\bf 3.9}, we obtain 
\begin{equation}
\label{eq-Ku1}
\begin{aligned}
\|\partial_t K^j u(t)\|_{L^2(\Omega)}^2 
&\le C\|K^{j-1}u(0)\|_{L^2(\Omega)}^2 + C\int_0^t \|K^{j-1}u(\tau)\|_{H^2(\Omega)}^2 d\tau\\
&\quad + C\int_0^t (t-\tau)^{-\alpha} \|\partial_\tau K^{j-1}u(\tau)\|_{L^2(\Omega)}^2 d\tau,
\end{aligned}
\end{equation}
and
\begin{equation}
\label{eq-Ku2}
\begin{aligned}
\|K^j u(t)\|_{H^2(\Omega)}^2 
&\le C\int_0^t \|K^{j-1}u(\tau)\|_{H^2(\Omega)}^2 d\tau\\
&\quad + C\int_0^t (t-\tau)^{-\alpha} \|\partial_\tau K^{j-1}u(\tau)\|_{L^2(\Omega)}^2 d\tau
\end{aligned}
\end{equation}
for $j=1,2,\ldots,N$. 
By the definition of operator $K$, it is readily to see that $K^{j-1} u(0) = 0$, $j=2,\ldots,N$. 
Then by \eqref{eq-Ku1}--\eqref{eq-Ku2} and $(t-\tau)^{\alpha} \le T^\alpha$, we obtain
\begin{equation}
\label{eq-Ku3}
\begin{aligned}
&\|\partial_t K^j u(t)\|_{L^2(\Omega)}^2 + \|K^j u(t)\|_{H^2(\Omega)}^2\\
\le &\; C\int_0^t (t-\tau)^{-\alpha} \left(\|\partial_\tau K^{j-1}u(\tau)\|_{L^2(\Omega)}^2 
+ \|K^{j-1}u(\tau)\|_{H^2(\Omega)}^2 \right)d\tau
\end{aligned}
\end{equation}
for all $j=2,3,\ldots,N$. 
Recall that $C>0$ denotes a generic constant which means $C$ can change values in different lines. 
By using \eqref{eq-Ku3} with $j=N, N-1$ and by a direct calculation, 
we arrive at the following estimate: 
\begin{align*}
&\|\partial_t K^N u(t)\|_{L^2(\Omega)}^2 + \|K^N u(t)\|_{H^2(\Omega)}^2\\
\le&\; C\int_0^t (t-\tau)^{-\alpha} \left(\|\partial_\tau K^{N-1}u(\tau)\|_{L^2(\Omega)}^2 
+ \|K^{N-1}u(\tau)\|_{H^2(\Omega)}^2 \right)d\tau\\
\le&\; C\int_0^t (t-\tau)^{-\alpha} \int_0^\tau (\tau-s)^{-\alpha} 
\left(\|\partial_s K^{N-2}u(s)\|_{L^2(\Omega)}^2 + \|K^{N-2}u(s)\|_{H^2(\Omega)}^2 \right) ds d\tau. 
\end{align*}
Moreover, by Fubini's theorem, we see that
\begin{align*}
&\|\partial_t K^N u(t)\|_{L^2(\Omega)}^2 + \|K^N u(t)\|_{H^2(\Omega)}^2\\
\le&\; C\int_0^t \left(\|\partial_s K^{N-2}u(s)\|_{L^2(\Omega)}^2 
+ \|K^{N-2}u(s)\|_{H^2(\Omega)}^2 \right) \int_s^t (t-\tau)^{-\alpha} (\tau-s)^{-\alpha} d\tau ds\\
\le&\; C\int_0^t (t-s)^{1-2\alpha} \left(\|\partial_s K^{N-2}u(s)\|_{L^2(\Omega)}^2 
+ \|K^{N-2}u(s)\|_{H^2(\Omega)}^2 \right) ds. 
\end{align*}

We calculate by iterations and obtain
\begin{align*}
&\|\partial_t K^N u(t)\|_{L^2(\Omega)}^2 + \|K^N u(t)\|_{H^2(\Omega)}^2\\
\le&\; C\int_0^t (t-s)^{-\alpha + (N-2)(1-\alpha)} \left(\|\partial_s Ku(s)\|_{L^2(\Omega)}^2 
+ \|Ku(s)\|_{H^2(\Omega)}^2 \right) ds\\
\le&\; C\int_0^t (t-s)^{-\alpha + (N-2)(1-\alpha)} \int_0^s (s-\tau)^{-\alpha} 
\left( \|\partial_\tau u(\tau)\|_{L^2(\Omega)}^2 + \|u(\tau)\|_{H^2(\Omega)}^2 \right) d\tau ds\\
& + C\int_0^t (t-s)^{-\alpha + (N-2)(1-\alpha)} \|u_0\|_{L^2(\Omega)}^2 ds\\
\le&\; C\|u_0\|_{L^2(\Omega)}^2 + C\int_0^t (t-\tau)^{N(1-\alpha)-1} 
\left(\|\partial_\tau u(\tau)\|_{L^2(\Omega)}^2 + \|u(\tau)\|_{H^2(\Omega)}^2 \right) d\tau.
\end{align*}
Since $N\ge \frac{1}{1-\alpha}$ implies $N(1-\alpha) -1 \ge 0$, the above inequality yields
\begin{equation}
\label{eq-Ku4}
\begin{aligned}
&\|\partial_t K^N u(t)\|_{L^2(\Omega)}^2 + \|K^N u(t)\|_{H^2(\Omega)}^2\\
\le&\; C\|u_0\|_{L^2(\Omega)}^2 + C\int_0^t \left(\|\partial_\tau u(\tau)\|_{L^2(\Omega)}^2 
+ \|u(\tau)\|_{H^2(\Omega)}^2 \right) d\tau.
\end{aligned}
\end{equation}
Noting that we have proved 
$u\in H^1(0,T;L^2(\Omega))\cap L^2(0,T; H^2(\Omega)\cap H_0^1(\Omega))$, 
then for $t\in (0,T)$, the right-hand side of \eqref{eq-Ku4} is finite, which leads to 
$$
K^N u \in W^{1,\infty}(0,T;L^2(\Omega)) \cap L^\infty(0,T;H^2(\Omega)\cap H_0^1(\Omega)). 
$$
In the same way, we can prove
\begin{align*}
&\|\partial_t K^j F(t)\|_{L^2(\Omega)}^2 + \|K^j F(t)\|_{H^2(\Omega)}^2\\
\le&\; C\|F(0)\|_{L^2(\Omega)}^2 + C\int_0^t (t-\tau)^{j(1-\alpha)-1} 
\left(\|\partial_\tau F(\tau)\|_{L^2(\Omega)}^2 + \|F(\tau)\|_{H^2(\Omega)}^2 \right) d\tau
\end{align*}
for all $j=1,2,\ldots,N-1$. 
Moreover, under the assumptions 
$u_0\in H^2(\Omega)\cap H_0^1(\Omega)$, $f\in H^1(0,T;L^2(\Omega))$, 
improved regularity for parabolic equations (e.g., \cite{E98}) yields that
$F\in W^{1,\infty}(0,T;L^2(\Omega))\cap L^\infty(0,T;H^2(\Omega)\cap H_0^1(\Omega))$ 
and the regularity estimate 
\begin{equation*}
\|\partial_t F(t)\|_{L^2(\Omega)}^2 + \|F(t)\|_{H^2(\Omega)}^2 
\le C \left( \|u_0\|_{H^2(\Omega)}^2 + \|f\|_{H^1(0,T;L^2(\Omega))}^2\right) 
\end{equation*}
holds true for $t\in(0,T)$, which further implies that
\begin{align*}
&\|\partial_t K^j F(t)\|_{L^2(\Omega)}^2 + \|K^j F(t)\|_{H^2(\Omega)}^2\\
\le&\; C\|u_0\|_{L^2(\Omega)}^2 + C\int_0^t (t-\tau)^{-\alpha} 
\left(\|\partial_\tau F(\tau)\|_{L^2(\Omega)}^2 + \|F(\tau)\|_{H^2(\Omega)}^2 \right) d\tau \\
\le&\; C(1 + t^{1-\alpha}) \left(\|u_0\|_{H^2(\Omega)}^2 + \|f\|_{H^1(0,T;L^2(\Omega))}^2 \right).
\end{align*}
Here in the first inequality we used $F(0) = u_0$ from \eqref{equ-F} and the estimate that 
$(t-\tau)^{(j-1)(1-\alpha)} \le T^{(j-1)(1-\alpha)}\le T^{(N-2)(1-\alpha)}$ for $j=1,2,\cdots,N-1$. 
We finally see that
\begin{equation}
\label{esti-KF}
\begin{aligned}
&\mathrm{ess}\hspace{-0.2cm}\sup_{0\le t\le T} \left(\|\partial_t K^j F(t)\|_{L^2(\Omega)}^2 
+ \|K^j F(t)\|_{H^2(\Omega)}^2 \right) \\
\le&\; C\left( \|u_0\|_{H^2(\Omega)}^2 + \|f\|_{H^1(0,T;L^2(\Omega))}^2 \right),\quad j=0,1,\ldots,N-1.
\end{aligned}
\end{equation}
In the end, collecting the above estimates \eqref{eq-Ku4}, \eqref{esti-KF}, 
and recalling \eqref{esti-sec}, we conclude that 
$u\in W^{1,\infty}(0,T;L^2(\Omega))\cap L^\infty(0,T;H^2(\Omega)\cap H_0^1(\Omega))$ 
with the estimate:
\begin{align*}
\mathrm{ess}\hspace{-0.2cm}\sup_{0\le t\le T} \left(\|\partial_t u(t)\|_{L^2(\Omega)}^2 
+ \|u(t)\|_{H^2(\Omega)}^2 \right) 
\le C\left( \|u_0\|_{H^2(\Omega)}^2 + \|f\|_{H^1(0,T;L^2(\Omega))}^2 \right).
\end{align*}
We finish the last part of Theorem \ref{thm-uniq-exist}.

%%%%%%%%%%%%%%%%%%%%%%%%%%%%%%%%%%%%%%%%%%%%%%%%%
%%%%%%%%%%%%%%%%%%%%%%%%%%%%%%%%%%%%%%%%%%%%%%%%%
\section{Long-time asymptotics}
\label{sec-asymp-long}

In this section, we establish the long-time asymptotic estimate for the solution $u$ to 
the initial-boundary value problem \eqref{equ-u}--\eqref{equ-ibc}. 
The proof relies on a suitable energy estimate and the use of the asymptotic behavior for 
a related ordinary fractional differential equation. 

To start with, some important auxiliary results as follows are established. 
Henceforth, $(\cdot ,\cdot)$ denotes the scalar product in $L^2(\Omega)$. 
We have the following coercivity inequality for the Caputo derivative.

\begin{lem}
\label{lem1}
Let $y\in H^1(0,T;L^2(\Omega))$. Then 
$$
(y(t), \partial_t^\alpha y(t)) \ge \|y(t)\|_{L^2(\Omega)} \, \partial_t^\alpha \|y(t)\|_{L^2(\Omega)}
$$
holds true for {$0 < t < T$}. 
\end{lem}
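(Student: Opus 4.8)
The inequality to prove is pointwise in $t$, comparing the $L^2(\Omega)$ inner product $(y(t), \partial_t^\alpha y(t))$ with $\|y(t)\|_{L^2(\Omega)}\,\partial_t^\alpha\|y(t)\|_{L^2(\Omega)}$. The natural approach is to unfold the Caputo derivative via its integral definition on both sides and reduce the claim to a pointwise-in-$\tau$ algebraic inequality under the integral sign. Writing $\|y\|$ for $\|y(t)\|_{L^2(\Omega)}$, on the left we have
$$
(y(t),\partial_t^\alpha y(t)) = \frac{1}{\Gamma(1-\alpha)}\int_0^t (t-\tau)^{-\alpha}\,\Bigl(y(t), \tfrac{d}{d\tau}y(\tau)\Bigr)\,d\tau,
$$
while on the right, since $\|y(t)\|_{L^2(\Omega)}$ is (for $y\in H^1(0,T;L^2(\Omega))$) absolutely continuous in $t$ with $\frac{d}{d\tau}\|y(\tau)\| = \frac{1}{\|y(\tau)\|}(y(\tau),\frac{d}{d\tau}y(\tau))$ wherever $\|y(\tau)\|\neq 0$, we get
$$
\|y(t)\|\,\partial_t^\alpha\|y(t)\| = \frac{1}{\Gamma(1-\alpha)}\int_0^t (t-\tau)^{-\alpha}\,\|y(t)\|\,\frac{d}{d\tau}\|y(\tau)\|\,d\tau.
$$
Since the kernel $(t-\tau)^{-\alpha}/\Gamma(1-\alpha)$ is nonnegative, it suffices to show the integrand inequality
$$
\Bigl(y(t),\tfrac{d}{d\tau}y(\tau)\Bigr) \ge \|y(t)\|\,\frac{d}{d\tau}\|y(\tau)\| = \frac{\|y(t)\|}{\|y(\tau)\|}\Bigl(y(\tau),\tfrac{d}{d\tau}y(\tau)\Bigr)
$$
for a.e.\ $\tau\in(0,t)$.

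**Reducing to an algebraic step.** The cleaner route is to avoid differentiating the norm directly and instead use a convexity/Cauchy–Schwarz argument. Set $g(\tau) = \frac12\|y(\tau)\|^2$, so $g'(\tau) = (y(\tau),\frac{d}{d\tau}y(\tau))$ and $\|y(\tau)\|\frac{d}{d\tau}\|y(\tau)\| = g'(\tau)$. The key pointwise estimate I would establish is: for a.e.\ $\tau\in(0,t)$,
$$
\Bigl(y(t),\tfrac{d}{d\tau}y(\tau)\Bigr) - \frac{\|y(t)\|}{\|y(\tau)\|}\Bigl(y(\tau),\tfrac{d}{d\tau}y(\tau)\Bigr) \ge 0.
$$
This follows from Cauchy–Schwarz applied cleverly: write the left side as $\bigl(y(t) - \frac{\|y(t)\|}{\|y(\tau)\|}y(\tau),\ \frac{d}{d\tau}y(\tau)\bigr)$ — but that inner product has no definite sign on its own, so a direct pointwise argument fails. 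The correct classical trick (due to Alikhanov, and used by Vergara–Zacher) is instead to \emph{not} split pointwise but to use the full structure: expand
$$
\frac{1}{\Gamma(1-\alpha)}\int_0^t (t-\tau)^{-\alpha}\Bigl[(y(t),y'(\tau)) - (y(\tau),y'(\tau))\,\tfrac{\|y(t)\|}{\|y(\tau)\|}\Bigr]d\tau
$$
and recognize, after adding and subtracting the $\tau=t$ boundary contribution and integrating by parts in $\tau$, that it equals a manifestly nonnegative double integral. Concretely, using $\partial_t^\alpha y(t) = \frac{1}{\Gamma(1-\alpha)}\bigl[\frac{y(t)-y(0)}{t^\alpha} + \alpha\int_0^t \frac{y(t)-y(\tau)}{(t-\tau)^{\alpha+1}}d\tau\bigr]$ and the analogous formula for $\partial_t^\alpha\|y(t)\|$, the difference $(y(t),\partial_t^\alpha y(t)) - \|y(t)\|\partial_t^\alpha\|y(t)\|$ becomes, up to the positive factor $\frac{1}{\Gamma(1-\alpha)}$,
$$
\frac{\|y(t)\|^2 - \|y(t)\|\|y(0)\| - (y(t),y(0)-y(t))\cdots}{t^\alpha} + \alpha\int_0^t \frac{\|y(t)\|^2 - (y(t),y(\tau)) - \|y(t)\|(\|y(t)\|-\|y(\tau)\|)}{(t-\tau)^{\alpha+1}}d\tau,
$$
and each bracket is nonnegative because $\|y(t)\|\,\|y(\tau)\| - (y(t),y(\tau)) \ge 0$ by Cauchy–Schwarz (and likewise with $\tau=0$).

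**Main obstacle and remaining care.** The genuine difficulties are two. First, the points where $\|y(\tau)\| = 0$ must be handled, since $\frac{d}{d\tau}\|y(\tau)\|$ is then not given by the quotient formula; here I would use that $\|y(\tau)\|$ is Lipschitz on compact subintervals when $y\in H^1(0,T;L^2(\Omega))$ only after noting $\||y(\tau+h)\| - \|y(\tau)\|\| \le \|y(\tau+h)-y(\tau)\|$, so $\|y(\cdot)\|\in H^1(0,T)$ and its Caputo derivative is well-defined; on the zero set $\{\|y\|=0\}$ of positive measure the weak derivative of $\|y(\cdot)\|$ vanishes a.e., matching $(y(\tau),y'(\tau))=0$ a.e.\ there, so no boundary terms are lost. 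Second, one must justify the integration by parts / the passage to the Marchaud-type representation of the Caputo derivative — this is legitimate for $H^1$ functions by the theory in \cite{GLY15}, \cite{KY20}, but the regularity bookkeeping (convergence of the improper integral at $\tau=t$) should be done with care, approximating by the truncated kernel $(t-\tau)^{-\alpha}\mathbf{1}_{\{\tau<t-\ep\}}$ and passing $\ep\to 0$. Once those technicalities are in place, the heart of the proof is just Cauchy–Schwarz inside a nonnegative-kernel integral, so I expect the write-up to be short; the main obstacle is purely the measure-theoretic handling of the norm function's differentiability.
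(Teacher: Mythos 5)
Your proof is correct and is essentially the paper's own argument: after the dust settles, both proofs land on the identical identity
$(y(t),\partial_t^\alpha y(t)) - \|y(t)\|_{L^2(\Omega)}\partial_t^\alpha\|y(t)\|_{L^2(\Omega)}
= g_\alpha(t)\bigl(\|y(t)\|\|y(0)\| - (y(t),y(0))\bigr) - \int_0^t g_\alpha'(t-\tau)\bigl(\|y(t)\|\|y(\tau)\| - (y(t),y(\tau))\bigr)\,d\tau$
with $g_\alpha(t)=t^{-\alpha}/\Gamma(1-\alpha)$, and both conclude by Cauchy--Schwarz and the sign of $g_\alpha$, $g_\alpha'$. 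The only organizational difference is where the integration by parts is performed: the paper splits the difference as $I_1+I_2$ with $I_1=(y,\partial_t^\alpha y)-\frac12\partial_t^\alpha\|y\|^2$ and $I_2=\frac12\partial_t^\alpha\|y\|^2-\|y\|\partial_t^\alpha\|y\|$, so that each integration by parts acts on the \emph{squared} differences $\|y(t)-y(\tau)\|^2$ and $(\|y(t)\|-\|y(\tau)\|)^2$; these are $O(t-\tau)$ for $y\in H^1(0,T;L^2(\Omega))$, which makes the boundary term at $\tau=t$ vanish and the resulting integrals converge for every $\alpha\in(0,1)$. Your route invokes the Marchaud representation of $\partial_t^\alpha y$ and $\partial_t^\alpha\|y\|$ \emph{separately}; for a general $H^1$ function one only has $\|y(t)-y(\tau)\|=o((t-\tau)^{1/2})$, so each individual boundary term and each individual integral $\int_0^t(t-\tau)^{-\alpha-1}\|y(t)-y(\tau)\|\,d\tau$ may fail to converge when $\alpha\ge\frac12$. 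The cure is exactly your final combined display: the numerator $\|y(t)\|\|y(\tau)\|-(y(t),y(\tau))=\frac12\bigl[\|y(t)-y(\tau)\|^2-(\|y(t)\|-\|y(\tau)\|)^2\bigr]$ is $O(t-\tau)$, so the truncation-and-limit argument you sketch must be carried out on the \emph{difference}, never on the two Marchaud representations individually; with that understood, your proof closes.
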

\begin{proof}
The proof is done by direct calculations. For simplicity, we set 
$g_\alpha (t) := \frac{t^{-\alpha}}{\Gamma(1-\alpha)}$ and its derivative 
$g_\alpha^\prime (t) = -\frac{\alpha t^{-\alpha-1}}{\Gamma(1-\alpha)}$, $0 < t \le T$, 
and we denote
$$
I(t) := (y(t), \partial_t^\alpha y(t)) - \|y(t)\|_{L^2(\Omega)} \, \partial_t^\alpha \|y(t)\|_{L^2(\Omega)}.
$$
Then it is sufficient to prove $I \ge 0$. 
For this, we divide $I(t)$ into two parts: $I(t) = I_1(t) + I_2(t)$ with
\begin{align*}
&I_1(t) := (y(t), \partial_t^\alpha y(t)) - \frac{1}{2} \partial_t^\alpha \|y(t)\|_{L^2(\Omega)}^2,\\
&I_2(t) := \frac{1}{2} \partial_t^\alpha \|y(t)\|_{L^2(\Omega)}^2 
- \|y(t)\|_{L^2(\Omega)} \, \partial_t^\alpha \|y(t)\|_{L^2(\Omega)}.
\end{align*}

By Fubini's theorem and the definition of Caputo fractional derivative, we find
\begin{align*}
I_1(t) 
&= \int_0^t g_\alpha (t-\tau) (y(t), \partial_\tau y(\tau)) d\tau - \int_0^t g_\alpha(t-\tau) 
(y(\tau),\partial_\tau y(\tau))d\tau\\
&= \int_0^t g_\alpha (t-\tau) (y(t)-y(\tau), \partial_\tau y(\tau)) d\tau\\
&= - \frac{1}{2} \int_0^t g_\alpha(t-\tau) \partial_\tau \|y(t)-y(\tau)\|_{L^2(\Omega)}^2 d\tau. 
\end{align*}
Then integration by parts yields
\begin{align*}
I_1(t) =
& -\frac{1}{2} g_\alpha(t-\tau)\|y(t)-y(\tau)\|_{L^2(\Omega)}^2 \Big|_{\tau=0}^{\tau=t} \\
& - \frac{1}{2} \int_0^t g_\alpha^\prime (t-\tau) \|y(t)-y(\tau)\|_{L^2(\Omega)}^2 d\tau.
\end{align*}
Moreover, we claim that 
\begin{equation}
\label{claim-g}
\lim_{\tau\to t}g_\alpha(t-\tau)\|y(t)-y(\tau)\|_{L^2(\Omega)}^2 =0.
\end{equation}
Indeed, by noting that
\begin{align*}
&g_\alpha(t-\tau)\|y(t)-y(\tau)\|_{L^2(\Omega)}^2\\
\le&\; C (t-\tau)^{-\alpha} \left\| \int_\tau^t|\partial_s y(s)| ds \right\|_{L^2(\Omega)}^2\\
\le&\; C (t-\tau)^{-\alpha} \int_\tau^t \|\partial_s y(s)\|_{L^2(\Omega)}^2 ds  \int_\tau^t 1^2 ds
\le C (t-\tau)^{1-\alpha} \|y\|_{H^1(0,T;L^2(\Omega))}^2,
\end{align*}
where in the last line we used H\"older's inequality and Fubini's theorem. 
Thus the claim \eqref{claim-g} is true and we see that
\begin{align*}
I_1(t) 
= \frac{1}{2} g_\alpha(t)\|y(t)-y(0)\|_{L^2(\Omega)}^2 - \frac{1}{2} \int_0^t g_\alpha^\prime(t-\tau) 
\|y(t)-y(\tau)\|_{L^2(\Omega)}^2 d\tau.
\end{align*}

For $I_2$, we note that the triangle inequality 
$\|y(t)\|_{L^2(\Omega)} - \|y(\tau)\|_{L^2(\Omega)} \le \|y(t)-y(\tau)\|_{L^2(\Omega)}$, 
so that 
$\lim_{\tau\to t} g_\alpha(t-\tau)\left(\|y(t)\|_{L^2(\Omega)} - \|y(\tau)\|_{L^2(\Omega)}\right)^2 = 0$, 
then by an argument similar to the calculation for $I_1$, we find
\begin{align*}
I_2(t) 
&= \int_0^t g_\alpha(t-\tau) \left(\|y(\tau)\|_{L^2(\Omega)} - \|y(t)\|_{L^2(\Omega)}\right) \partial_\tau 
\|y(\tau)\|_{L^2(\Omega)} d\tau\\
&= \frac{1}{2} \int_0^t g_\alpha(t-\tau) \partial_\tau 
\left(\|y(t)\|_{L^2(\Omega)}-\|y(\tau)\|_{L^2(\Omega)}\right)^2 d\tau\\
&= - \frac{1}{2} g_\alpha(t)\left(\|y(t)\|_{L^2(\Omega)}-\|y(0)\|_{L^2(\Omega)}\right)^2 \\
&\quad+ \frac{1}{2} \int_0^t g_\alpha^\prime(t-\tau) 
\left(\|y(t)\|_{L^2(\Omega)}-\|y(\tau)\|_{L^2(\Omega)}\right)^2 d\tau.
\end{align*}
Therefore, by noting 
$$
\|y(t)-y(\tau)\|_{L^2(\Omega)}^2 = \|y(t)\|_{L^2(\Omega)}^2 + \|y(\tau)\|_{L^2(\Omega)}^2 - 2(y(t),y(\tau))
$$
for $0\le \tau \le t$, we obtain
\begin{align*}
I(t) 
&= g_\alpha(t)\left(\|y(t)\|_{L^2(\Omega)}\|y(0)\|_{L^2(\Omega)} - (y(t),y(0))\right) \\
&\quad - \int_0^t g_\alpha^\prime(t-\tau) 
\left(\|y(t)\|_{L^2(\Omega)}\|y(\tau)\|_{L^2(\Omega)} - (y(t),y(\tau))\right) d\tau.
\end{align*}
Finally, H\"older's inequality and $g_\alpha > 0$, $g_\alpha^\prime < 0$ in $(0,T)$ 
imply $I(t) \ge 0$ for $0< t < T$, which completes the proof of the lemma.
\end{proof}

\begin{lem}
\label{lem-mp}
Let $\lambda >0$ and $p_0 \ge 0$ be constants. Assume that $w\in H^1(0,T)$ satisfies
\begin{equation}\label{equ-lem-mp}
\begin{cases}
\!\begin{alignedat}{2}
& \partial_t w(t) + p_0\partial_t^\alpha w(t) + \lambda w(t) \leq 0, \quad 0 < t < T,\\
& w(0) \le 0.
\end{alignedat}
\end{cases}
\end{equation}
Then $w(t) \le 0$ for $0<t\le T$. 
\end{lem}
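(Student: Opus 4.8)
\noindent\emph{Proof plan.} The idea is to run an energy argument on the positive part $w_+(t):=\max\{w(t),0\}$, parallel to the proof of Lemma~{\bf 4.1}. Since $w\in H^1(0,T)\hookrightarrow C[0,T]$, we have $w_+\in H^1(0,T)$ with $w_+'=w'\,\mathbf{1}_{\{w>0\}}$ a.e.\ and $w_+(0)=0$ (because $w(0)\le0$), so it suffices to show $w_+\equiv0$ on $[0,T]$.

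The key preliminary fact I would establish is the fractional chain inequality
\[
w_+(t)\,\partial_t^\alpha w(t)\ \ge\ \tfrac12\,\partial_t^\alpha\!\bigl(w_+(t)^2\bigr),\qquad\text{a.e. }t\in(0,T).
\]
This follows from the Vergara--Zacher inequality \cite{VZ} applied to the convex $C^1$ function $\Psi(s):=\tfrac12(\max\{s,0\})^2$ (with $\Psi'(s)=\max\{s,0\}$ Lipschitz and $\Psi(w(0))=0$); alternatively one may reprove it by the computation used for $I_1$ in Lemma~{\bf 4.1}: with $H(\tau):=\Psi(w(t))-\Psi(w(\tau))-\Psi'(w(t))(w(t)-w(\tau))$ one checks $\Psi'(w(t))\partial_t^\alpha w(t)-\partial_t^\alpha\Psi(w(t))=\frac{1}{\Gamma(1-\alpha)}\int_0^t(t-\tau)^{-\alpha}H'(\tau)\,d\tau$, and since $H\le0$ and $H(t)=0$, integration by parts (the boundary term at $\tau=t$ killed by $|H(\tau)|\le\tfrac12|w(t)-w(\tau)|^2\le C(t-\tau)$) leaves $\frac{1}{\Gamma(1-\alpha)}\bigl(-t^{-\alpha}H(0)-\alpha\int_0^t(t-\tau)^{-\alpha-1}H(\tau)\,d\tau\bigr)\ge0$.

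Granting this, multiply \eqref{equ-lem-mp} by $w_+(t)\ge0$ and use $w_+\partial_t w=\tfrac12\partial_t(w_+^2)$, $w_+w=w_+^2$ a.e., the chain inequality, and $p_0\ge0$ to obtain
\[
\tfrac12\,\partial_t\!\bigl(w_+(t)^2\bigr)+\tfrac{p_0}{2}\,\partial_t^\alpha\!\bigl(w_+(t)^2\bigr)+\lambda\,w_+(t)^2\ \le\ w_+(t)\bigl(\partial_t w(t)+p_0\partial_t^\alpha w(t)+\lambda w(t)\bigr)\ \le\ 0
\]
for a.e.\ $t\in(0,T)$. Writing $E:=w_+^2\in H^1(0,T)$ (so $E\ge0$, $E(0)=0$, $E$ continuous) and integrating over $(0,t)$, the semigroup property of the Riemann--Liouville integral gives $\int_0^t\partial_s^\alpha E(s)\,ds=J^{1-\alpha}(J^1E')(t)=\frac{1}{\Gamma(1-\alpha)}\int_0^t(t-s)^{-\alpha}E(s)\,ds\ge0$, whence
\[
\tfrac12\,E(t)+\frac{p_0}{2\Gamma(1-\alpha)}\int_0^t(t-s)^{-\alpha}E(s)\,ds+\lambda\int_0^t E(s)\,ds\ \le\ 0.
\]
All three terms on the left are nonnegative, so $E(t)\le0$; together with $E\ge0$ this forces $E\equiv0$, i.e.\ $w(t)\le0$ for $0<t\le T$.

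I expect the only genuine difficulty to be the chain inequality for $w$ merely in $H^1(0,T)$: the integration by parts produces a boundary term against the singular kernel $(t-\tau)^{-\alpha}$ as $\tau\to t$, and one must verify that it vanishes --- this is exactly the mechanism behind the limit \eqref{claim-g} in Lemma~{\bf 4.1}, and is settled here by the quadratic bound $|H(\tau)|\le\tfrac12|w(t)-w(\tau)|^2$ together with the $1/2$-H\"older continuity $H^1(0,T)\hookrightarrow C^{1/2}[0,T]$. Everything else (the first-order chain rule for $w_+$, the semigroup identity for $J^\gamma$, and the concluding sign argument) is routine.
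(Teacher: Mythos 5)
Your proof is correct, but it takes a genuinely different route from the paper's. The paper establishes the lemma as a pointwise extremum principle: for $w\in C^1[0,T]$ it argues by contradiction at a positive maximum point $t_0$, where $\partial_t w(t_0)\ge 0$ and, by Luchko's extremum result \cite{Luchko09b}, $\partial_t^\alpha w(t_0)\ge 0$, so that the left-hand side of \eqref{equ-lem-mp} would be strictly positive because $\lambda w(t_0)>0$; the $H^1$ case is then reduced to the smooth case by convolving $w$ with an arbitrary nonnegative $C^1$ kernel (the constant-coefficient operator commutes with convolution) and exploiting the arbitrariness of that kernel. You instead run an energy argument on the positive part $w_+$, whose key ingredient is the fractional convexity inequality $\Psi'(w)\,\partial_t^\alpha w\ge \partial_t^\alpha\Psi(w)$ for $\Psi(s)=\tfrac12(s_+)^2$ --- a variant of the paper's own Lemma~\ref{lem1} --- followed by integration in time and the semigroup property of the Riemann--Liouville integral. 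Both arguments are sound. Your route handles the $H^1$ regularity directly (the embedding $H^1(0,T)\hookrightarrow C^{1/2}[0,T]$ kills the boundary term $(t-\tau)^{-\alpha}H(\tau)$ as $\tau\to t$, exactly the mechanism of \eqref{claim-g}), avoids the mollification step, and in fact only uses $\lambda\ge 0$, whereas the paper's contradiction needs $\lambda w(t_0)>0$ strictly; the paper's route is more elementary in that it needs no convexity inequality, only the pointwise sign of $\partial_t^\alpha$ at a maximum. One small remark: the condition $\Psi(w(0))=0$ that you mention is not needed for the convexity inequality itself (the boundary term $-t^{-\alpha}H(0)$ has the correct sign for any convex $\Psi$); what you do need, and do have, is $E(0)=w_+(0)^2=0$ at the stage where you integrate in time.
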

\begin{proof}
We start the proof in the case of $w\in C^1[0,T]$. 
By continuity, we find \eqref{equ-lem-mp} holds true for $t\in [0,T]$. 
We prove the lemma by contradiction.
Assume that $w$ is positive at some point in $(0,T]$. 
Then $w$ attains its positive maximum in $(0,T]$, that is, there exists $t_0\in (0,T]$ such that 
$w(t_0)>0$ and $w(t_0)\ge w(t)$ for $t\in [0,T]$. 
Immediately, we have $\partial_t w(t_0) \ge 0$. 
With reference to \cite[Theorem 1]{Luchko09b}, 
we find $\partial_t^\alpha w(t_0) \ge 0$. 
Thus, we obtain
$$
\partial_t w(t_0) + p_0\partial_t^\alpha w(t_0) + \lambda w(t_0) > 0,
$$
which is a contradiction to \eqref{equ-lem-mp}. 

Next, we assume $w\in H^1(0,T)$. 
For any nonnegative function $\varphi\in C^1[0,T]$, we denote 
$\overline w:=w*\varphi:=\int_0^t w(t-\tau)\varphi(\tau)d\tau$. 
It is not difficult to see that $\overline w\in C^1[0,T]$ and satisfies 
$$
\begin{cases}
\begin{alignedat}{2}
& \partial_t \overline w(t) + p_0\partial_t^\alpha \overline w(t) + \lambda \overline w(t) \leq 0, 
\quad 0< t\le T,\\
& \overline w(0) = 0.
\end{alignedat}
\end{cases}
$$
Therefore, from the above argument, it follows that $\overline w(t)\le 0$ for any $t\in (0,T]$, that is,
$$
\int_0^t w(t-\tau)\varphi(\tau)d\tau \le 0,\quad 0< t\le T.
$$
Since $\varphi\in C^1[0,T]$ is nonnegative and can be arbitrarily chosen, 
we must have $w\le 0$ in $(0,T]$. 
Indeed, noting that $w\in H^1(0,T) \supset C[0,T]$, if $w\le 0$ fails in $(0,T]$, 
then we can choose $t_0\in(0,T)$ and sufficiently small constant $\varepsilon>0$ 
such that $w(t)>0$ for any $t\in [t_0-\varepsilon,t_0+\varepsilon]$. 
We then construct $\varphi\in C^1[0,T]$ satisfying 
$$
\varphi(t)=
\begin{cases}
1 & \mbox{ if } t\in [t_0-\frac{\varepsilon}{2},t_0+\frac{\varepsilon}{2}]\\
0 & \mbox{ if } t\in (0,t_0-\varepsilon)\cup(t_0+\varepsilon,T).
\end{cases}
$$
In this case, we calculate the convolution $w*\varphi$ and find that
$$
\int_0^t w(t-\tau)\varphi(\tau)d\tau 
\ge \int_{t_0-\frac{\varepsilon}{2}}^{t_0+\frac{\varepsilon}{2}} w(t-\tau) d\tau
\ge \varepsilon\inf_{(t_0-\frac{\varepsilon}{2},t_0+\frac{\varepsilon}{2})} w(t) >0,
$$
which is a contradiction. 
We must have $w(t)\le 0$ for any $t\in (0,T]$. This completes the proof of the lemma.
\end{proof}

\begin{lem}
\label{lem-alpha}
Let $\lambda >0$, $p\in L^\infty(0,T)$ 
and $p_0 \le p(t) \le p_1$, $t\in (0,T)$ for some positive constants $p_0,p_1>0$. 
Assume that $z\in W^{1,\infty}(0,T)$ satisfies $z(t)\ge 0$ and 
\begin{equation}\label{equ-lem-mp1}
\partial_t z(t) + p(t)\partial_t^\alpha z(t) + \lambda z(t) \leq 0, \quad \mbox{} 0< t< T,
\end{equation}
Then $\partial_t^\alpha z(t) \le 0$ for $0< t< T$. 
\end{lem}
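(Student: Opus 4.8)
The plan is to argue by contradiction, after recasting the hypothesis as a Volterra‑type inequality for $Q:=\partial_t^\alpha z$ in which the sign of $Q$ can actually be followed. Since $z\in W^{1,\infty}(0,T)$ we have $z'\in L^\infty(0,T)$, so $Q=J^{1-\alpha}z'$ (the Riemann--Liouville integral of order $1-\alpha$ of $z'$) is continuous on $[0,T]$ with $Q(0)=0$, and $z=z(0)+J^{\alpha}Q$. From the hypothesis, for a.e.\ $t$ one has $z'(t)\le -p(t)Q(t)-\lambda z(t)$; applying the order‑$(1-\alpha)$ integral, which preserves nonnegativity, and using $z\ge 0$, $\lambda>0$ I would obtain
\begin{equation*}
Q(t)+J^{1-\alpha}\!\big(pQ\big)(t)\le -\lambda J^{1-\alpha}z(t)\le 0,\qquad 0<t<T .
\end{equation*}
I would keep the term $\lambda J^{1-\alpha}z(t)\le 0$ in reserve, together with the reformulation of $z\ge 0$ as $J^{\alpha}Q(t)\ge -z(0)$ for all $t$.

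Assume now that $Q$ is positive somewhere. Since $Q$ is continuous with $Q(0)=0$, the open set $\{Q>0\}$ has a leftmost connected component $(a,b)$, with $Q\le 0$ on $[0,a]$ and $Q(a)=0$. If $z(0)=0$ I would first show $z\equiv 0$ (hence $Q\equiv 0$, already a contradiction): a point realizing $\max_{[0,t_0]}z>0$ must lie in $(0,t_0]$, and there $\partial_t z\ge 0$, $\partial_t^\alpha z\ge 0$ (Caputo derivative at a point of maximum over $[0,t_0]$) and $\lambda z>0$ contradict the equation, exactly as in the proof of Lemma~\ref{lem-mp} (after the usual regularization). So one may assume $z(0)>0$; then, because $Q(0^+)=0$ and $z(0)>0$ force $z'(t)\le -p(t)Q(t)-\lambda z(t)<0$ for small $t$, one gets $Q<0$ on some interval $(0,\delta)$ and hence $a\ge\delta>0$. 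Moreover, on $(a,b)$ the hypothesis gives $z'<-pQ-\lambda z<0$, so $z$ is strictly decreasing there.

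It then remains to reach a contradiction from the Volterra inequality evaluated as $t\to a^{+}$: for $t\in(a,b)$,
\begin{equation*}
Q(t)\le \frac{1}{\Gamma(1-\alpha)}\int_0^a (t-s)^{-\alpha}p(s)\,|Q(s)|\,ds-\frac{1}{\Gamma(1-\alpha)}\int_a^t (t-s)^{-\alpha}p(s)Q(s)\,ds-\lambda J^{1-\alpha}z(t),
\end{equation*}
where the second integral is strictly positive and the first is the ``memory'' carried by the negative values of $Q$ on $(0,a)$. The key step is to show this memory term is absorbed by $\lambda J^{1-\alpha}z(t)$ — the one place where $z\ge 0$ enters beyond discarding $\lambda z$: using $J^{\alpha}|Q\mathbf 1_{[0,a]}|(a)\le z(0)$, which is merely $z(a)\ge 0$, together with $J^{1-\alpha}z=z(0)\,t^{1-\alpha}/\Gamma(2-\alpha)+J^1Q$, one aims to bound the first integral by $\lambda J^{1-\alpha}z(t)+o(1)$ and conclude $\limsup_{t\to a^{+}}Q(t)\le 0$, contradicting $Q>0$ on $(a,b)$; an extremum argument at $t=a$ in the spirit of Lemma~\ref{lem-mp} would package this cleanly. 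The hard part is exactly this last estimate: the variable coefficient $p$ blocks a direct comparison with the constant‑coefficient fractional ODE through Lemma~\ref{lem-mp}, and since $\partial_t^\alpha z$ is genuinely negative near $t=0$ when $z(0)>0$, the contradiction cannot come from a naive ``first time $\partial_t^\alpha z>0$'' argument but must be extracted from the interplay of the two fractional integrations $J^{1-\alpha}$ and $J^{\alpha}$ with the constraint $z=z(0)+J^{\alpha}\partial_t^\alpha z\ge 0$.
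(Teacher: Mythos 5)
Your reduction to a statement about $Q:=\partial_t^\alpha z=J^{1-\alpha}z'$ is the right first move (it is also the paper's: set $\widetilde z=\partial_t^\alpha z$, note $\widetilde z(0)=0$ and $z-z(0)=J^\alpha\widetilde z$), and the preliminary observations ($Q$ continuous, $Q(0)=0$, $Q<0$ near $0$ when $z(0)>0$, the $z(0)=0$ case) are fine. But the argument does not close, and the gap is exactly where you flag it. Worse, the absorption estimate you hope for is not merely hard, it is the wrong inequality to aim at: evaluating your Volterra inequality $Q(t)+J^{1-\alpha}(pQ)(t)\le-\lambda J^{1-\alpha}z(t)$ at $t=a$ (where $Q(a)=0$ and $Q\le0$ on $[0,a]$) gives
\begin{equation*}
\lambda J^{1-\alpha}z(a)\;\le\;\frac{1}{\Gamma(1-\alpha)}\int_0^a(a-s)^{-\alpha}p(s)\,|Q(s)|\,ds,
\end{equation*}
i.e.\ the \emph{reverse} of the domination you need for $t\to a^{+}$. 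So the memory term carried by the negative past of $Q$ cannot in general be absorbed by $\lambda J^{1-\alpha}z$ (think of $\lambda$ small and $p$ large; the constraint $J^\alpha|Q|\le z(0)$ controls a different fractional integral from $J^{1-\alpha}(p|Q|)$ and does not help). The first crossing point $a$ is simply the wrong point at which to seek a contradiction, because there the memory works against you.

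The fix is to not integrate the inequality at all. Since $\partial_t z=\partial_t J^\alpha Q$, hypothesis \eqref{equ-lem-mp1} reads directly as
\begin{equation*}
\partial_t J^\alpha Q(t)+p(t)\,Q(t)\;\le\;-\lambda z(t)\;\le\;0,\qquad Q(0)=0,
\end{equation*}
that is, a Riemann--Liouville derivative of order $1-\alpha$ of $Q$ plus a zero-order term with positive coefficient. If $Q$ were positive somewhere, it would attain a positive maximum at some $t_1$, and the extremum principle for the Riemann--Liouville derivative (Theorem~2.1 of \cite{AL14}) gives $\partial_t J^\alpha Q(t_1)\ge\frac{t_1^{\alpha-1}}{\Gamma(\alpha)}Q(t_1)>0$, while $p(t_1)Q(t_1)>0$; the left-hand side is then strictly positive, a contradiction. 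At the \emph{global} positive maximum the memory of the nonlocal operator works in your favor, which is precisely what your first-crossing argument lacks. (Two secondary points: this pointwise argument needs $Q$ regular enough for \cite{AL14} to apply, so one first perturbs $z$ by $\mu^{-1}E_{\alpha,1}(-\mu t^\alpha)$ to make the inequality strict and then approximates in $C^2$, letting $\varepsilon\to0$ and $\mu\to\infty$ at the end; and the variable coefficient $p$ causes no trouble here, since only its sign at $t_1$ is used.)
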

\begin{proof}
Step 1. 
\iffalse 
We first assume $z\in C^2[0,T]$ and we verify $\partial_t^\alpha z(0) \le 0$. 
If not, we have $\partial_t^\alpha z(0) > 0$. 
From the continuity of $\partial_t^\alpha z(t)$, we see that there exists a small $\delta>0$ such that
\begin{equation}\label{eq6.5}
\partial_t^\alpha z(t) > 0, \quad \forall t\in [0,\delta].
\end{equation}
Then by \eqref{equ-lem-mp1} and the assumption $z(t)\ge 0$, we have
$$
\partial_t z(t) \le -p(t)\partial_t^\alpha z(t) - \lambda z(t) < 0, \quad \forall t\in [0,\delta].
$$
According to the definition of Caputo fractional derivative, we obtain
$$
\partial_t^\alpha z(t) = \frac{1}{\Gamma(1-\alpha)}\int_0^t (t-\tau)^{-\alpha} \partial_\tau z(\tau) d\tau 
< 0, \quad \forall t\in [0,\delta],
$$
which contradicts to \eqref{eq6.5}. Then we have $\partial_t^\alpha z(0) \le 0$.  
\fi
We first assume $z\in C^2[0,T]$ and we find $\partial_t^\alpha z(0) = 0$,  
which can be easily verified by the following estimate:
$$
|\partial_t^\alpha z(t)| 
\le \int_0^t \frac{(t-s)^{-\alpha}}{\Gamma(1-\alpha)} |\partial_s z(s)| ds
\le \frac{\|z\|_{C^1[0,T]}}{\Gamma(2-\alpha)} t^{1-\alpha}, \quad t\in [0,T].
$$

Now we set $\widetilde z := \partial_t^\alpha z$, then $J^\alpha \widetilde z = z - z(0)$, 
where $J^\alpha$ denotes the Riemann-Liouville integral operator. 
By \eqref{equ-lem-mp1}, we obtain
\begin{equation}\label{eq7}
\begin{cases}
\begin{alignedat}{2}
& \partial_t J^\alpha \widetilde z(t) + p(t)\widetilde z(t) \le -\lambda  z(t) \leq 0, 
\quad & 0< t< T,\\
& \widetilde z(0) = \partial_t^\alpha z(0) = 0.
\end{alignedat}
\end{cases}
\end{equation}
Then we claim that $\widetilde z(t)\le 0$ for any $t\in (0,T]$. 
Otherwise, there exists $t_1\in (0,T]$ such that $\widetilde z$ attains its positive maximum 
$\widetilde z(t_1)>0$ at point $t_1$. 
By Theorem~{\bf 2.1} in \cite{AL14}, we have the Riemann-Liouville fractional derivative at 
$t=t_1$ satisfies 
$$
\partial_t J^\alpha \widetilde z(t_1) \ge \frac{t_1^{\alpha-1}}{\Gamma(\alpha)}\widetilde z(t_1) > 0,
$$ 
and hence $\partial_t J^\alpha \widetilde z > 0$ in a neighborhood of $t_1$, 
which yields a contraction to \eqref{eq7}. 
Thus, $\widetilde z$ is non-positive and then we have $\partial_t^\alpha z(t) \le 0$ for $0<t\le T$. 

Step 2. We assume $z\in W^{1,\infty}(0,T)$. We denote 
$z_\mu(t) := z(t)+\mu^{-1}E_{\alpha,1}(-\mu t^\alpha)$ with $\mu>0$. 
Then we see that $z_\mu$ is positive and satisfies the following equation
\begin{equation}
\label{eq-zmu}
(\partial_t  + p(t)\partial_t^\alpha  + \lambda) z_\mu 
= (\partial_t + p(t) \partial_t^\alpha  +\lambda) z + R_\mu(t)
\end{equation}
where 
$R_\mu(t):=-t^{\alpha-1} E_{\alpha,\alpha}(-\mu t^\alpha) - p(t) E_{\alpha,1}(-\mu t^\alpha) 
+ \frac{\lambda}{\mu} E_{\alpha,1}(-\mu t^\alpha)$ 
can be easily derived by differential properties \eqref{eq-ml1} and \eqref{eq-ml2} of 
the Mittag-Leffler function. 
Moreover, by the useful estimate \eqref{esti-ML} for the Mittag-Leffler functions, 
we can see that there exists a constant $\delta_\mu>0$ such that the following inequality 
$$
- t^{\alpha-1} E_{\alpha,\alpha}(-\mu t^\alpha) - p(t) E_{\alpha,1}(-\mu t^\alpha) 
+ \frac{\lambda}{\mu} E_{\alpha,1}(-\mu t^\alpha) < -\delta_\mu
$$
is valid for any $t\in(0,T]$ and $\mu>>1$, 
which combined with the equality \eqref{eq-zmu} and the assumption \eqref{equ-lem-mp1} implies
\begin{align*}
(\partial_t  + p(t)\partial_t^\alpha  + \lambda) z_\mu 
\le -\delta_\mu,\quad \mbox{}0<t<T.
\end{align*}
Then for any $\varepsilon>0$, we can choose $z_{\mu,\varepsilon}\in C^2[0,T]$ 
such that $z_{\mu,\varepsilon}\ge 0$ and 
$$
\|z_{\mu,\varepsilon} - z_\mu\|_{W^{1,\infty}(0,T)} \le \varepsilon.
$$
By a direct calculation, we see that 
$$
\begin{aligned}
(\partial_t  + p(t)\partial_t^\alpha  + \lambda) z_{\mu,\varepsilon}
= &\; (\partial_t  + p(t)\partial_t^\alpha  + \lambda) z_{\mu} +(\partial_t + p(t) \partial_t^\alpha  
+\lambda) (z_{\mu,\varepsilon} - z_\mu)\\
\le &\; -\delta_\mu + \left(1+ \frac{p_1 T^{1-\alpha}}{\Gamma(2-\alpha)} + \lambda \right)\varepsilon.
\end{aligned}
$$
Consequently, letting $\varepsilon <<1$, we see that
\begin{align*}
(\partial_t  + p(t)\partial_t^\alpha + \lambda) z_{\mu,\varepsilon} \le 0.
\end{align*}
Now by step 1, it follows that 
$\partial_t^\alpha z_{\mu,\varepsilon}\le 0$ for any sufficiently small $\varepsilon>0$. 
Letting $\varepsilon\to 0$ and we have 
$\partial_t^\alpha z_\mu\le 0$ for any sufficiently large $\mu$. 
Finally, again from the estimate \eqref{esti-ML} for Mittag-Leffler functions, 
we see that $E_{\alpha,1}(-\mu t^\alpha)$ tends to $0$ as $\mu\to \infty$, 
hence that $\partial_t^\alpha z\le 0$ by letting $\mu\to\infty$. 
We then finish the proof of the lemma.
\end{proof}

Equipped with the above lemmata, we prove our main results by applying an energy estimate. 

\begin{proof}[Proof of Theorem \ref{thm-asymp-long}]

According to the result of the forward problem (Theorem \ref{thm-uniq-exist}), we note that 
$u\in H^1(0,T;H^{-1}(\Omega)) \cap L^2(0,T;H_0^1(\Omega)) \cap C([0,T];L^2(\Omega))$
provided that $u_0\in L^2(\Omega)$. 
Since this regularity is not enough to guarantee the above lemmata that we will use in the proof, 
we need introduce the approximate solutions $\{u_N\}_{N=1}^\infty$ which solve
\begin{equation}
\label{equ-uN}
\left\{
\begin{aligned}
& \partial_t u_N + q(t) \partial_t^\alpha u_N = - A u_N + c(x,t) u_N, 
&&\quad (x,t)\in \Omega \times (0,T),\\
& u_N(x,t) = 0, 
&&\quad (x,t)\in \partial\Omega \times (0,T),\\
& u_N(x, 0) = \sum_{k=1}^N (u_0, \varphi_k)_{L^2(\Omega)}\varphi_k, 
&&\quad x\in \Omega
\end{aligned}
\right.
\end{equation}
for each $N\in \mathbb{N}$. 
Here we recall that $\{\varphi_k\}_{k=1}^\infty \subset H^2(\Omega)\cap H_0^1(\Omega)$ is 
the set of the eigenfunctions of $A$ with homogeneous Dirichlet boundary condition and forms 
an orthonormal basis of $L^2(\Omega)$. 
By the third part of Theorem \ref{thm-uniq-exist}, we see that 
$u_N \in W^{1,\infty}(0,T;L^2(\Omega))$, which guarantees the regularity when we apply 
Lemmata {\bf 4.1}--{\bf 4.3} in the following context. 

Now we multiply $u_N$ on both sides of the first equation of \eqref{equ-uN} and 
integrate over $\Omega$. 
Integration by parts yields
\begin{equation}\label{eq1}
(\partial_t u_N, u_N) + q(t)(\partial_t^\alpha u_N, u_N) 
+ \sum_{i,j=1}^d (a_{ij}\partial_{x_i} u_N,\partial_{x_j} u_N) - (c(t)u_N, u_N) = 0
\end{equation}
for $0<t< T$. 
Next we estimate the left-hand side of \eqref{eq1} from below. 

The ellipticity of the operator $A$ and the Poincar\'e inequality imply
\begin{equation}\label{eq2}
\sum_{i,j=1}^d (a_{ij}\partial_{x_i} u_N(t),\partial_{x_j} u_N(t)) 
\ge \nu \|\nabla u_N(t)\|_{L^2(\Omega)}^2 \ge \lambda \|u_N(t)\|_{L^2(\Omega)}^2
\end{equation}
with some positive constant $\lambda>0$, which depends only on $\nu$ and $\Omega$. 
By Lemma~{\bf 4.1}, we have
\begin{equation}\label{eq3}
(\partial_t^\alpha u_N(t), u_N(t)) \ge 
\|u_N(t)\|_{L^2(\Omega)} \, \partial_t^\alpha \|u_N(t)\|_{L^2(\Omega)}
\end{equation} 
for $0<t< T$. 
Since $c \le 0$ in $\Omega \times (0,T)$ and 
\begin{equation}\label{eq4}
(\partial_t u_N(t), u_N(t)) = \frac{1}{2}\partial_t \|u_N(t)\|_{L^2(\Omega)}^2 
= \|u_N(t)\|_{L^2(\Omega)}\, \partial_t \|u_N(t)\|_{L^2(\Omega)},
\end{equation}
we insert \eqref{eq2}--\eqref{eq4} into \eqref{eq1} and obtain
\begin{equation}\label{eq5}
\|u_N(t)\|_{L^2(\Omega)} \left(\partial_t \|u_N(t)\|_{L^2(\Omega)} 
+ q(t)\partial_t^\alpha \|u_N(t)\|_{L^2(\Omega)} + \lambda\|u_N(t)\|_{L^2(\Omega)} \right) \leq 0
\end{equation}
for $0<t< T$. 
We assert that 
\begin{equation}\label{eq6}
\partial_t \|u_N(t)\|_{L^2(\Omega)} + q(t)\partial_t^\alpha \|u_N(t)\|_{L^2(\Omega)} 
+ \lambda\|u_N(t)\|_{L^2(\Omega)} \leq 0
\end{equation}
for $0<t< T$. 
If \eqref{eq6} does not hold, then there exists $t_2\in (0,T]$ such that 
\begin{equation*}
\partial_t \|u_N(t_2)\|_{L^2(\Omega)} + q(t_2)\partial_t^\alpha \|u_N(t_2)\|_{L^2(\Omega)} 
+ \lambda\|u_N(t_2)\|_{L^2(\Omega)} > 0. 
\end{equation*}
Then by \eqref{eq5}, we see that $\|u_N(t_2)\|_{L^2(\Omega)} = 0$, which indicates that 
$\|u_N(t)\|_{L^2(\Omega)}$ attains its minimum at $t=t_2$. 
Immediately we have $\partial_t \|u_N(t_2)\|_{L^2(\Omega)} \le 0$, 
and $\partial_t^\alpha \|u_N(t_2)\|_{L^2(\Omega)} \le 0$ from Theorem~{\bf 1} 
in \cite{Luchko09b}. 
This yields a contradiction. 

Next we estimate $\|u_N(t)\|_{L^2(\Omega)}$ by some function from above. 
We introduce an auxiliary function $v$ which solves the following ordinary fractional differential 
equation: 
\begin{equation}\label{equ-ode}
\begin{cases}
\begin{alignedat}{2}
& \partial_t v(t) + q_1\partial_t^\alpha v(t) + \lambda v(t) = 0, 
\quad & t>0,\\
& v(0) = \|u_0\|_{L^2(\Omega)}.
\end{alignedat}
\end{cases}
\end{equation}
Here we recall that $q_1$ is a positive constant and $q(t)\le q_1$ for $t>0$. 
Let $w_N(t) = \|u_N(t)\|_{L^2(\Omega)} - v(t)$. 
Since $T>0$ is arbitrary, by \eqref{equ-ibc}, \eqref{eq6} and \eqref{equ-ode} and 
$$
\|u_N(0)\|_{L^2(\Omega)} = \left(\sum_{k=1}^N (u_0,\varphi_k)_{L^2(\Omega)}^2 \right)^{\frac12}
\le \|u_0\|_{L^2(\Omega)} = v(0),
$$
we obtain
\begin{equation}\label{equ-ode2}
\begin{cases}
\begin{alignedat}{2}
& \partial_t w_N + q_1\partial_t^\alpha w_N + \lambda w_N 
\leq (q_1-q(t))\partial_t^\alpha \|u_N(t)\|_{L^2(\Omega)}, 
\quad & 0<t<T,\\
& w_N(0) \le 0.
\end{alignedat}
\end{cases}
\end{equation}
From \eqref{eq6}, applying Lemma~{\bf 4.3}, we can see that 
$\partial_t^\alpha \|u_N(t)\|_{L^2(\Omega)} \le 0$, 
which means the right-hand side of \eqref{equ-ode2} is not positive. 
Now we can apply Lemma~{\bf 4.2} for \eqref{equ-ode2} to obtain $w_N(t)\le 0$ for $0<t<T$, that is,
\begin{equation*}
\|u_N(t)\|_{L^2(\Omega)} \le v(t)\quad \mbox{ for }0<t<T.
\end{equation*}
By Theorem \ref{thm-uniq-exist}, we find that for arbitrarily fixed $T>0$, $u_N(t)$ 
converges to $u(t)$ in $L^2(\Omega)$ for any $0<t<T$. 
Moreover, we note that $v$ is the solution to \eqref{equ-ode}, which is independent of $N$. 
Thus, we have
\begin{equation*}
\|u(t)\|_{L^2(\Omega)} \le v(t)\quad \mbox{ for }0<t<T.
\end{equation*}
Finally, since $T>0$ can be arbitrarily fixed and $v$ is also independent of $T$, 
it remains to discuss the long-time asymptotic behavior of $v$.

By applying the Laplace transform to the ordinary fractional diffusion equation 
\eqref{equ-ode}, we can derive 
$$
|v(t)| \le C\|u_0\|_{L^2(\Omega)} t^{-\alpha}, \quad t\ge t_0
$$
for arbitrarily fixed $t_0>0$. We put the details in Lemma {\bf A.1} in the appendix. 
This completes the proof of Theorem~\ref{thm-asymp-long}. 
\end{proof}

%%%%%%%%%%%%%%%%%%%%%%%%%%%%%%%%%%%%%%%%%%%%%%%%%
%%%%%%%%%%%%%%%%%%%%%%%%%%%%%%%%%%%%%%%%%%%%%%%%%
%%%%%%%%%%%%%%%%%%%%%%%%%%%%%%%%%%%%%%%%%%%%%%%%%
%%%%%%%%%%%%%%%%%%%%%%%%%%%%%%%%%%%%%%%%%%%%%%%%%
\section{Conclusions and open problems}
In this paper, we considered the diffusion equation with fractional derivative on the bounded 
multi-dimensional domain subject to a homogeneous Dirichlet boundary condition. 
Firstly, by regarding the fractional term as a source, we transferred the differential equation to 
an equivalent integral form, 
then we used the Fredholm alternative for the compact operator to show the well-posedness for 
the forward problem, which is essential for numerically analyzing this type of problems and for 
dealing with the inverse problems for the fractional diffusion equation. 
On the basis of the forward problem, the energy estimate and maximum principle allow us to 
obtain the asymptotic decay in time for the solution to the initial-boundary value problem 
\eqref{equ-u}--\eqref{equ-ibc}.

For the sake of simplicity, we consider the case of only one fractional derivative 
in this paper. As one can see from the proof, we can similarly prove Theorem \ref{thm-uniq-exist}  
for a multi-term time-fractional diffusion equation: 
$$
\partial_t u + \sum_{j=1}^\ell q_j(t)\partial_t^{\alpha_j} u 
= - Au + c(x,t)u + f(x,t), \quad (x,t)\in \Omega\times (0,T), 
$$ 
where $\ell\in \mathbb{N}$ is given and we assume $0<\alpha_1<\alpha_2<\ldots<\alpha_\ell<1$. 
Moreover, if we assume further $f=0$, $c\le 0$ and $q_j(t)=q_j\ge 0$, $j=2,\ldots,\ell$, 
$p_0\le q_1(t)\le p_1$ with some positive constants $p_1\ge p_0>0$, 
then by following the proof in Section {\bf 4} and Appendix, Theorem \ref{thm-asymp-long} can be 
immediately generalized in the multi-term case with the following long-time asymptotic estimate
$$
\|u(\,\cdot\,,t)\|_{L^2(\Omega)}
\leq C\|u_0\|_{L^2(\Omega)}t^{-\alpha_1}, 
$$
which indicates that the asymptotic behavior of the solution depends on the lowest order of 
the fractional derivatives. 
The assumption that $q_j$, $j=2,\ldots,\ell$ are nonnegative constants may be relaxed by modifying 
the argument we used in this paper but here we do not discuss more details.

As for the open problems related to the initial-boundary value problems for the fractional diffusion 
equations, let us mention the following ones: 
In the proofs of our results, we needed the assumption that $q$ is independent of $x$, 
which is necessary for deriving Lemma {\bf 4.1}. 
It would be interesting to investigate what happens with the asymptotic properties of the solution 
if this assumption is relaxed. 

Other interesting directions of the research would be that 
whether the estimate is valid for the fractional diffusion with nonlinearity. 
It still remains open and should be investigated. 

\section*{Acknowledgments}
The first author thanks National Natural Science Foundation of China 11801326. 
The second author was supported by Japan Society for the Promotion of Science under the program of JSPS Postdoctoral Fellowships for Research in Japan.
The third authors was supported by Grant-in-Aid for Scientific Research (A) 20H00117 
of Japan Society for the Promotion of Science, 
The National Natural Science Foundation of China (No. 11771270, 91730303), 
and the RUDN University Strategic Academic Leadership Program. 
This work was also supported by A3 Foresight Program \lq Modeling and Computation of Applied 
Inverse Problems' of Japan Society for the Promotion of Science and the Research Institute for 
Mathematical Sciences, an International Joint Usage/Research Center located in Kyoto University.

\appendix

\section{Appendix}
In this part, we will follow the argument used in \cite[Section 4]{GM08} to give the proof for 
the long-time asymptotic behavior of the solution $v$ to the following fractional ordinary equation
\begin{equation}\label{eq-ode}
\begin{cases}
\begin{alignedat}{2}
& \partial_t v(t) + q_1\partial_t^\alpha v(t) + \lambda v(t) = 0, 
\quad & t>0,\\
& v(0) = v_0.
\end{alignedat}
\end{cases}
\end{equation}
\begin{lem}
\label{lem-asy}
Assume $q_1>0$, $\lambda>0$ and $v_0\ne0$ are given constant. Then the solution $v$ to 
the problem \eqref{eq-ode} admits the following long-time asymptotic estimate
$$
|v(t)| \le C|v_0| t^{-\alpha}, \quad t \ge t_0
$$ 
for any $t_0>0$. 
Here the order $\alpha$ is sharp and the constant $C$ depends only on $q_1,\alpha,\lambda$ and $t_0$.
\end{lem}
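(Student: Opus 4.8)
The plan is to follow the Laplace-transform argument of \cite[Section 4]{GM08}. First I would transform \eqref{eq-ode}: since the Laplace transforms of $\partial_t v$ and $\partial_t^\alpha v$ are $s\widehat v(s)-v_0$ and $s^\alpha\widehat v(s)-s^{\alpha-1}v_0$, the equation becomes $(s+q_1 s^\alpha+\lambda)\widehat v(s)=(1+q_1 s^{\alpha-1})v_0$, hence
$$
\widehat v(s)=v_0\,\frac{1+q_1 s^{\alpha-1}}{\,s+q_1 s^\alpha+\lambda\,}.
$$
Fixing the principal branches of $s^\alpha$ and $s^{\alpha-1}$ with a cut along $(-\infty,0]$, the function $\widehat v$ is meromorphic on $\mathbb C\setminus(-\infty,0]$, its only possible poles being the zeros of $P(s):=s+q_1 s^\alpha+\lambda$. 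The key preliminary step is to rule these out: writing $s=r\e^{\mathrm{i}\theta}$ with $0<|\theta|<\pi$ gives $\Im P(s)=r\sin\theta+q_1 r^\alpha\sin(\alpha\theta)$, and since $\alpha\theta\in(0,\alpha\pi)\subset(0,\pi)$ when $\theta\in(0,\pi)$, and analogously for $\theta\in(-\pi,0)$, this is strictly positive (resp. strictly negative); on the positive axis $P(r)=r+q_1 r^\alpha+\lambda>0$. Thus $\widehat v$ is holomorphic on $\mathbb C\setminus(-\infty,0]$, with $|\widehat v(s)|\le C|v_0|/|s|$ as $|s|\to\infty$ and $|\widehat v(s)|\le C|v_0|\,|s|^{\alpha-1}$ as $s\to0$.

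Next I would invert by the Bromwich formula and deform the contour onto a Hankel contour hugging the cut $(-\infty,0]$. The large circular arcs vanish by the $|s|^{-1}$ decay, and the small circle of radius $\varepsilon$ around the origin contributes $O(\varepsilon^\alpha)\to0$ by the bound near $s=0$. Collapsing the two banks of the cut and using that $\widehat v$ has real coefficients, so that $\widehat v(r\e^{-\mathrm{i}\pi})=\overline{\widehat v(r\e^{\mathrm{i}\pi})}$, produces the real representation
$$
v(t)=-\frac1\pi\int_0^\infty \e^{-rt}\,\Im\!\bigl[\widehat v(r\e^{\mathrm{i}\pi})\bigr]\,\d r,\qquad t>0.
$$
Here $s=r\e^{\mathrm{i}\pi}$ gives $s^{\alpha-1}=-r^{\alpha-1}\e^{\mathrm{i}\pi\alpha}$, so the numerator equals $1-q_1 r^{\alpha-1}\e^{\mathrm{i}\pi\alpha}$ and the denominator equals $\lambda+q_1 r^\alpha\e^{\mathrm{i}\pi\alpha}-r$, whose imaginary part $q_1 r^\alpha\sin(\pi\alpha)$ is strictly positive for $r>0$.

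The upper bound then follows by splitting the integral at $r=1$. On $(1,\infty)$ one has $|\lambda+q_1 r^\alpha\e^{\mathrm{i}\pi\alpha}-r|\ge q_1 r^\alpha\sin(\pi\alpha)$ and $|1-q_1 r^{\alpha-1}\e^{\mathrm{i}\pi\alpha}|\le 1+q_1$, whence $|\widehat v(r\e^{\mathrm{i}\pi})|\le C|v_0|r^{-\alpha}$; since $\int_1^\infty\e^{-rt}r^{-\alpha}\,\d r$ decays faster than any power of $t$, this piece is $\le C(t_0)|v_0|t^{-\alpha}$ for $t\ge t_0$. On $(0,1)$ the denominator $\lambda+q_1 r^\alpha\e^{\mathrm{i}\pi\alpha}-r$ is continuous and nonvanishing on $[0,1]$ (value $\lambda$ at $r=0$), hence bounded below by some $c_1>0$, so $|\widehat v(r\e^{\mathrm{i}\pi})|\le C|v_0|(1+r^{\alpha-1})$ and $\int_0^1\e^{-rt}(1+r^{\alpha-1})\,\d r\le t^{-1}+\Gamma(\alpha)t^{-\alpha}\le C\,t^{-\alpha}$ for $t\ge t_0$, using $\alpha<1$. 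This gives $|v(t)|\le C|v_0|t^{-\alpha}$ with $C=C(q_1,\alpha,\lambda,t_0)$. For sharpness I would refine near $r=0$: $\widehat v(r\e^{\mathrm{i}\pi})=\tfrac{v_0}{\lambda}\bigl(1-q_1 r^{\alpha-1}\e^{\mathrm{i}\pi\alpha}\bigr)\bigl(1+O(r^\alpha)\bigr)$, so $\Im[\widehat v(r\e^{\mathrm{i}\pi})]=-\tfrac{v_0q_1}{\lambda}\sin(\pi\alpha)\,r^{\alpha-1}+O(r^{2\alpha-1})$, and substituting into the representation, with $\Gamma(\alpha)\Gamma(1-\alpha)=\pi/\sin(\pi\alpha)$, yields $v(t)=\tfrac{q_1 v_0}{\lambda\Gamma(1-\alpha)}\,t^{-\alpha}+o(t^{-\alpha})$; as $q_1 v_0/(\lambda\Gamma(1-\alpha))\ne0$, the exponent $\alpha$ cannot be improved. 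I expect the main obstacle to be the contour-deformation step — justifying holomorphy on $\mathbb C\setminus(-\infty,0]$ through the non-vanishing of $P$, the decay needed to discard the large arcs, and the integrability of $\widehat v$ at the branch point so the small circle drops out; once the real integral representation of $v(t)$ is secured, the remaining estimates are elementary.
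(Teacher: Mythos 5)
Your proposal is correct and follows essentially the same route as the paper's Appendix: Laplace transform, verification that $s+q_1s^\alpha+\lambda$ has no zeros off the cut, deformation of the Bromwich path onto a Hankel contour, the resulting real integral representation of $v(t)$ against $e^{-rt}$, and a splitting of that integral into a near-origin piece (giving the $t^{-\alpha}$ decay) and a far piece (giving $t^{-1}+t^{-\alpha}$, absorbed into $t^{-\alpha}$ for $t\ge t_0$). The only genuine divergences are minor: you split at $r=1$ and bound the denominator on $[0,1]$ by compactness, whereas the paper splits at a small $\delta$ chosen so that $\lambda-r-q_1r^\alpha\ge\lambda/2$ and uses the explicit inequality $(\lambda-r)^2+q_1^2r^{2\alpha}+2(\lambda-r)q_1r^\alpha\cos(\alpha\pi)\ge(\lambda-r-q_1r^\alpha)^2$ — both are valid and yield constants depending only on $q_1,\alpha,\lambda$. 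For sharpness, you extract the exact leading term $v(t)=\frac{q_1v_0}{\lambda\Gamma(1-\alpha)}t^{-\alpha}+o(t^{-\alpha})$ via a Watson-type expansion of the spectral density near $r=0$, which is slightly stronger than the paper's argument: the paper instead observes that the density $H^{(1)}_{\alpha,0}(r;q_1,\lambda)$ is positive and bounds it below on $[0,1]$ to get $|v(t)|\ge c|v_0|t^{-\alpha}$. Your version gives the precise constant; the paper's avoids tracking the error terms in the expansion. No gaps.
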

\begin{proof}
By applying the Laplace transform to the ordinary fractional diffusion equation \eqref{eq-ode}, 
we find that
$$
\mathcal{L}[v](s) = \frac{1 + q_1 s^{\alpha-1}}{s + q_1 s^\alpha + \lambda} v_0,
\quad s>0
$$
where $\mathcal{L}[v]$ denotes the Laplace transform of the function $v$. 
Then we get $v(t)$ by the Fourier-Mellin transform of $\mathcal{L}[v](s)$. 
Since it is readily to see that $s + q_1 s^\alpha + \lambda$ has no zero in the main sheet of 
the Riemann surface including the negative real axis, 
we can deform the original Bromwich path into the Hankel path $Ha(\varepsilon)$ and obtain
\begin{equation*}
v(t) 
= \frac{1}{2\pi i} v_0 \int_{Ha(\varepsilon)} e^{st}\frac{1 + q_1 s^{\alpha-1}}{s + q_1 s^\alpha + \lambda} 
ds.
\end{equation*}
Here the Hankel path $Ha(\varepsilon)$ is the loop which starts from $-\infty$ along the lower side of 
the negative real axis, encircles the circular disc $|s|=\varepsilon$ and ends at $-\infty$ along 
the upper side of the negative real axis. 
Letting $\varepsilon \to 0$ yields
\begin{equation*}
v(t) 
= v_0 \int_0^\infty e^{-rt} H_{\alpha,0}^{(1)}(r;q_1,\lambda) dr,
\end{equation*}
with
\begin{align*}
H_{\alpha,0}^{(1)}(r;q_1,\lambda) 
&= -\frac{1}{\pi} \Im 
\left\{\frac{1 + q_1 s^{\alpha-1}}{s + q_1 s^\alpha + \lambda}\bigg|_{s=re^{i\pi}}\right\} \\
&= \frac{1}{\pi} \frac{\lambda q_1 r^{\alpha-1} \sin{(\alpha \pi)}}{(\lambda-r)^2 + q_1^2 r^{2\alpha} 
+ 2(\lambda-r)q_1 r^\alpha \cos{(\alpha \pi)}}
\end{align*}
where $\Im z$ denotes the imaginary part of $z\in \mathbb{C}$. 
We break the above integral into two parts as follows
\begin{equation*}
v(t) 
= v_0 \int_0^\delta e^{-rt} H_{\alpha,0}^{(1)}(r;q_1,\lambda) dr
+ v_0 \int_\delta^\infty e^{-rt} H_{\alpha,0}^{(1)}(r;q_1,\lambda) dr
=: I_1+I_2,
\end{equation*}
where $0<\delta\le \lambda$ will be chosen later. We will estimate $I_1$ and $I_2$ separately. 
For $I_1$, in view of the inequality that
\begin{align*}
&(\lambda-r)^2 + q_1^2 r^{2\alpha} + 2(\lambda-r)q_1 r^\alpha \cos{(\alpha \pi)}
\\
\ge& (\lambda-r)^2 + q_1^2 r^{2\alpha} - 2(\lambda-r)q_1 r^\alpha
=  (\lambda - r - q_1r^\alpha)^2, \quad 0<r<\delta,
\end{align*}
we can choose $\delta>0$ being sufficiently small such that 
$\lambda - r - q_1r^\alpha\ge \frac\lambda2$ for any $0<r<\delta$. 
Consequently, we arrive at the following inequalities
\begin{align*}
|I_1(t)| \le& \frac{2|v_0|q_1 \sin(\alpha\pi)}\pi \int_0^\delta e^{-rt} r^{\alpha-1} dr
\\
\le& \frac2\pi q_1 \sin(\alpha\pi) \Gamma(\alpha) |v_0| t^{-\alpha},\quad t>0.
\end{align*}
Next, we estimate $I_2$. Firstly, for any $s=re^{i\pi}$ with $r>0$, a direct calculation yields 
$$
|s+q_1s^\alpha + \lambda| \ge \Im s + q_1 \Im s^\alpha
= |s| \sin\pi + q_1 |s|^\alpha \sin (\alpha\pi)
= q_1\sin (\alpha\pi) r^\alpha >0.
$$
Hence we see that 
$$
|H_{\alpha,0}^{(1)}(r;q_1,\lambda)|
\le \frac{1}{\pi} \left| \frac{1 + q_1 s^{\alpha-1}}{s + q_1 s^\alpha + \lambda}\bigg|_{s=re^{i\pi}}\right|
\le \frac{ 1 + q_1 r^{\alpha-1}}{\pi q_1 \sin (\alpha\pi) r^\alpha}
\le \frac{ 1 + q_1 r^{\alpha-1}}{\pi q_1 \sin (\alpha\pi) \delta^\alpha}
%\le \frac{1}{\pi q_1\sin (\alpha\pi) \delta^\alpha}  (1 + q_1 r^{\alpha-1})
$$
holds true for any $r\ge \delta$. Therefore, we obtain 
\begin{align*}
|I_2(t)| 
&\le \frac{|v_0|}{\pi q_1 \sin (\alpha\pi) \delta^\alpha}\int_\delta^\infty e^{-rt} (1 + q_1 r^{\alpha-1}) dr
\\
&\le \frac{|v_0|}{\pi q_1 \sin (\alpha\pi) \delta^\alpha} 
\left(\frac1t + q_1\Gamma(\alpha) t^{-\alpha}\right),\quad t>0.
\end{align*}
Finally, collecting all the above estimates for $I_1$ and $I_2$, we arrive at the inequality
$$
|v(t)| \le C|v_0| (t^{-1}+t^{-\alpha}), \quad t>0,
$$
and thus, by noting $t^{-1} = t^{\alpha-1} t^{-\alpha} \le t_0^{\alpha-1} t^{-\alpha}$ for $t\ge t_0$, 
we have
$$
|v(t)| \le C|v_0| t^{-\alpha}, \quad t\ge t_0%\mbox{as $t$ is large enough,}
$$
where the constant $C>0$ depends only on $q_1$, $\lambda$, $\alpha$ and $t_0$. 
Moreover, for any $r>0$, we have $H_{\alpha,0}^{(1)}(r;q_1,\lambda)>0$ and 
for any $0\le r\le 1$, we have the inequality
$$
(\lambda-r)^2+q_1^2r^{2\alpha} + 2(\lambda-r)q_1 r^\alpha \cos (\alpha\pi) 
\le (|\lambda-r|+q_1 r^\alpha)^2 \le (\lambda+q_1)^2,     
$$
which implies
\begin{align*}
|v(t)|
%&\ge |v_0| \int_0^\infty e^{-rt} H_{\alpha,0}^{(1)}(r;q_1,\lambda) dr \\
&\ge |v_0| \int_0^1 e^{-rt} H_{\alpha,0}^{(1)}(r;q_1,\lambda) dr\\
&\ge |v_0| \frac{\lambda q_1 \sin (\alpha\pi)}{\pi (\lambda+q_1)^2}\int_0^1 e^{-rt} r^{\alpha-1} dr\\
&= |v_0| t^{-\alpha} \frac{\lambda q_1 \sin (\alpha\pi)}{\pi (\lambda+q_1)^2}
\int_0^t e^{-r} r^{\alpha-1} dr\\
&\ge |v_0| t^{-\alpha} \frac{\lambda q_1 \sin (\alpha\pi)}{\pi (\lambda+q_1)^2}
\int_0^{t_0} e^{-r} r^{\alpha-1} dr, \quad t\ge t_0.
\end{align*}
Thus, we find that the decay rate $t^{-\alpha}$ is sharp and we finish the proof of the lemma.
\end{proof}

%%%%%%%%%%%%%%%%%%%%%%%%%%%%%%%%%%%%%%%%%%%%%%%%%
%%%%%%%%%%%%%%%%%%%%%%%%%%%%%%%%%%%%%%%%%%%%%%%%%
%%%%%%%%%%%%%%%%%%%%%%%%%%%%%%%%%%%%%%%%%%%%%%%%%
%%%%%%%%%%%%%%%%%%%%%%%%%%%%%%%%%%%%%%%%%%%%%%%%%

\end{document}